\newtheorem{theorem}{Theorem}[section]
\newtheorem{lemma}[theorem]{Lemma}
\newtheorem{corollary}[theorem]{Corollary}
\newtheorem{remark}[theorem]{Remark}
\numberwithin{equation}{section}
\title{\bf Local well-posedness for the inhomogeneous biharmonic nonlinear Schr\"{o}dinger equation in Sobolev spaces}
\author{{JinMyong An, PyongJo Ryu, JinMyong Kim$^*$}\\
\footnotesize{Faculty of Mathematics, {\bf Kim Il Sung} University, Pyongyang, Democratic People's Republic of Korea}\\
\footnotesize{$^*$ Corresponding Author: JinMyong Kim (jm.kim0211@ryongnamsan.edu.kp)}}
\date{}
\begin{document}
\maketitle
\begin{abstract}
In this paper, we study the Cauchy problem for the inhomogeneous biharmonic nonlinear Schr\"{o}dinger (IBNLS) equation
\[iu_{t} +\Delta^{2} u=\lambda |x|^{-b}|u|^{\sigma}u,~u(0)=u_{0} \in H^{s} (\mathbb R^{d}),\]
where $d\in \mathbb N$, $s\ge 0$, $0<b<4$, $\sigma>0$ and $\lambda \in \mathbb R$.
Under some regularity assumption for the nonlinear term, we prove that the IBNLS equation is locally well-posed in $H^{s}(\mathbb R^{d})$ if $d\in \mathbb N$, $0\le s <\min \{2+\frac{d}{2},\frac{3}{2}d\}$, $0<b<\min\{4,d,\frac{3}{2}d-s,\frac{d}{2}+2-s\}$ and $0<\sigma< \sigma_{c}(s)$. Here $\sigma_{c}(s)=\frac{8-2b}{d-2s}$ if $s<\frac{d}{2}$, and $\sigma_{c}(s)=\infty$ if $s\ge \frac{d}{2}$. Our local well-posedness result improves the ones of Guzm\'{a}n-Pastor [Nonlinear Anal. Real World Appl. 56 (2020) 103174] and Liu-Zhang [J. Differential Equations 296 (2021) 335-368] by extending the validity of $s$ and $b$.
\end{abstract}

\noindent \emph{Mathematics Subject Classification (2020)}. Primary 35Q55; Secondary 35A01.

\noindent \emph{Keywords}. Inhomogeneous Biharmonic Nonlinear Schr\"{o}dinger Equation, Cauchy Problem, Local Well-posedness, Strichartz estimates.\\

\section{Introduction}\label{sec 1.}

In this paper, we study the Cauchy problem for the inhomogeneous biharmonic nonlinear Schr\"{o}dinger (IBNLS) equation
\begin{equation} \label{GrindEQ__1_1_}
\left\{\begin{array}{l} {iu_{t} +\Delta^{2}u=\lambda |x|^{-b} |u|^{\sigma} u,~(t,x)\in \mathbb R\times \mathbb R^{d},}\\
{u(0,x)=u_{0}(x) \in H^{s}(\mathbb R^{d})}, \end{array}\right.
\end{equation}
where $d\in \mathbb N$, $s\ge 0$, $0<b<4$, $\sigma>0$ and $\lambda \in \mathbb R$.
The equation \eqref{GrindEQ__1_1_} may be seen as an inhomogeneous version of the classic biharmonic nonlinear Schr\"{o}dinger equation (also called fourth-order NLS equation),
\begin{equation} \label{GrindEQ__1_2_}
iu_{t} +\Delta^{2}u=\lambda |u|^{\sigma} u.
\end{equation}
The biharmonic nonlinear Schr\"{o}dinger equation \eqref{GrindEQ__1_2_} was introduced by Karpman \cite{K96} and Karpman-Shagalov \cite{KS97} to take into account the role of small fourth-order dispersion terms in the propagation of intense laser beams in a bulk medium with Kerr nonlinearity and it has attracted a lot of interest during the last two decades. See, for example, \cite{D18B,D18I,D21,GC07,LZ21,MZ16,P07,PX13} and the references therein.
The local and global well-posedness as well as scattering and blow-up in the energy space $H^{2}$ have been widely studied.
See \cite{D21,MZ16,P07,PX13} for example.
On the other hand, the local and global well-posedness in the fractional Sobolev spaces $H^s$ for the 4NLS equation \eqref{GrindEQ__1_1_} have also been studied by several authors. See \cite{D18B,D18I,GC07,LZ21} for example.

The equation \eqref{GrindEQ__1_1_} has a counterpart for the Laplacian operator, namely, the inhomogeneous nonlinear Schr\"{o}dinger (INLS) equation
\begin{equation} \label{GrindEQ__1_3_}
iu_{t} +\Delta u=\lambda |x|^{-b} |u|^{\sigma} u.
\end{equation}
The INLS equation \eqref{GrindEQ__1_3_} arises in nonlinear optics for modeling the propagation of laser beam and it has been widely studied by many authors. See, for example, \cite{AT21,AK211,AK212,AKC21,C21,DK21,GM21, MMZ21} and the references therein.
The local and global well-posedness as well as blow-up and scattering in the energy space $H^{1}$ for \eqref{GrindEQ__1_3_} have been widely studied by many authors.
See, for example, \cite{C21,DK21,GM21, MMZ21} and the references therein.
Meanwhile, the local and global well-posedness for the INLS equation \eqref{GrindEQ__1_3_} in the fractional Sobolev space have also been studied in \cite{AT21,AK211,AK212,AKC21}.

In this paper, we are interested in studying the well-posedness for the IBNLS equation \eqref{GrindEQ__1_1_} in Sobolev spaces $H^{s}(\mathbb R^{d})$.
Before recalling the known results for \eqref{GrindEQ__1_1_}, let us give some facts about this equation.
The IBNLS equation \eqref{GrindEQ__1_1_} has the conserved mass and energy, defined respectively by
\begin{equation} \label{GrindEQ__1_4_}
M\left(u(t)\right):=\int_{\mathbb R^{d}}{| u(t,x)|^{2}dx}=M\left(u_{0} \right),
\end{equation}
\begin{equation} \label{GrindEQ__1_5_}
E\left(u(t)\right):=\frac{1}{2}\int_{\mathbb R^{d}}{|\Delta u(t,x)|^2 dx}-\frac{\lambda
}{\sigma+2}\int_{\mathbb R^{d}}{|x|^{-b}\left|u(t,x)\right|^{\sigma+2}dx}=E\left(u_{0} \right).
\end{equation}
The IBNLS equation \eqref{GrindEQ__1_1_} is invariant under scaling $u_{\alpha}(t,x)=\alpha^{\frac{4-b}{\sigma}}u(\alpha^{4}t,\alpha x ),~\alpha >0$.
An easy computation shows that
$$
\left\|u_{\alpha}(t)\right\|_{\dot{H}^{s}}=\alpha^{s+\frac{4-b}{\sigma}-\frac{d}{2}}\left\|u(t)\right\|_{\dot{H}^{s}}.
$$
We thus define the critical Sobolev index
\begin{equation} \label{GrindEQ__1_6_}
s_{c}:=\frac{d}{2}-\frac{4-b}{\sigma}.
\end{equation}
Putting
\begin{equation} \label{GrindEQ__1_7_}
\sigma_{c}(s):=
\left\{\begin{array}{cl}
{\frac{8-2b}{d-2s},} ~&{{\rm if}~s<\frac{d}{2},}\\
{\infty,}~&{{\rm if}~s\ge \frac{d}{2},}
\end{array}\right.
\end{equation}
we can easily see that $s>s_{c}$ is equivalent to $\sigma<\sigma_{c}(s)$. If $s<\frac{d}{2}$, $s=s_{c}$ is equivalent to $\sigma=\sigma_{c}(s)$.
For initial data $u_{0}\in H^{s}(\mathbb R^{d})$, we say that the Cauchy problem \eqref{GrindEQ__1_1_} is $H^{s}$-critical (for short, critical) if $0\le s<\frac{d}{2}$ and $\sigma=\sigma_{c}(s)$.
If $s\ge 0$ and $\sigma<\sigma_{c}(s)$, the problem \eqref{GrindEQ__1_1_} is said to be  $H^{s}$-subcritical (for short, subcritical).
Especially, if $\sigma =\frac{8}{d-2s}$, the problem is known as $L^{2}$-critical or mass-critical.
If $\sigma =\frac{8-2b}{d-4}$ with $d\ge 5$, it is called $H^{2}$-critical or energy-critical.
Throughout the paper, a pair $(p,q)$ is said to be  admissible, for short $(p,q)\in A$, if
\begin{equation}\label{GrindEQ__1_8_}
\frac{2}{p}+\frac{d}{q}\le\frac{d}{2},~p\in [2,\infty],~q\in [2,\infty).
\end{equation}
Especially, we say that a pair $(p,q)$ is biharmonic Schr\"{o}dinger admissible (for short $(p,q)\in B$), if $(p,q)\in A$ with $\frac{4}{p}+\frac{d}{q}=\frac{d}{2}$. We also say that a pair $(p,q)$ is non-endpoint biharmonic Schr\"{o}dinger admissible (for short $(p,q)\in B_{0}$), if $(p,q)\in B$ with $p>2$. Note that $(p,q)\in B_{0}$ if, and only if, $\frac{4}{p}+\frac{d}{q}=\frac{d}{2}$ and
\begin{equation} \nonumber
\left\{\begin{array}{ll}
{2\le q< \frac{2d}{d-2}},~&{{\rm if}~d\ge 5,} \\
{2\le q<\infty} ,~&{{\rm if}~d\le 4.}
\end{array}\right.
\end{equation}
If $(p,q)\in A$ and $\frac{2}{p}+\frac{d}{q}=\frac{d}{2}$, then a pair $(p,q)$ is said to be Schr\"{o}dinger admissible (for short $(p,q)\in S$).
We also denote for $(p,q)\in [1,\infty]^{2}$,
\begin{equation}\label{GrindEQ__1_9_}
\gamma_{p,q}=\frac{d}{2}-\frac{4}{p}-\frac{d}{q}.
\end{equation}

The IBNLS equation \eqref{GrindEQ__1_1_} has attracted a lot of interest in recent years. See, for example, \cite{CG21, CGP20, GP20, GP21, LZ212, S21} and the references therein.
Guzm\'{a}n-Pastor \cite{GP20} proved that \eqref{GrindEQ__1_1_} is locally well-posed in $L^{2}$, if $d\in \mathbb N$, $0<b<\min\left\{4,d\right\}$ and $0<\sigma<\sigma_{c}(0)$. They also established the local well-posedness in $H^{2}$ for $d\ge 3$, $0<b<\min\{4,\frac{d}{2}\}$, $\max\{0,\frac{2-2b}{d}\}<\sigma<\sigma_{c}(2)$.
Afterwards, Cardoso-Guzm\'{a}n-Pastor \cite{CGP20} established the local and global well-posedness in $\dot{H}^{s_{c}}\cap \dot{H}^{2}$ with $d\ge 5$, $0<s_{c}<2$, $0<b<\min\{4,\frac{d}{2}\}$ and $\max\{1,\frac{8-2b}{d}\}<\sigma< \frac{8-2b}{d-4}$.
Recently, Liu-Zhang \cite{LZ212} established the local well-posedness in $H^{s}$ with $0<s<2$ by using the Besov space theory.
More precisely, they proved that the IBNLS equation \eqref{GrindEQ__1_1_} is locally well-posed in $H^{s}$ if $d\in \mathbb N$, $0<s\le 2$, $0<\sigma<\sigma_{c}(s)$, $0<b<\min\{4,\frac{d}{2}\}$. See Theorem 1.5 of \cite{LZ212} for details.
This result about the local well-posedness of \eqref{GrindEQ__1_1_} improves the one of \cite{GP20} by not only extending the validity of $d$ and $s$ but also removing the lower bound $\sigma>\frac{2-2b}{d}$.
This local well-posedness result is directly applied to obtain the global well-posedness result in $H^{2}$ for the mass-subcritical case $0<\sigma< \frac{8-2b}{d}$ and mass-critical case  $\sigma=\frac{8-2b}{d}$. See Corollary 1.10 of \cite{LZ212}. The global well-posedness and scattering in $H^{2}$ for the intercritical case $\frac{8-2b}{d}<\sigma<\sigma_{c}(2)$ have also been widely studied. See \cite{CG21, CGP20, GP20, GP21, S21} for example.

In this paper, we investigate the local well-posedness in the fractional Sobolev spaces $H^s$ with $0\le s <\min \{2+\frac{d}{2},\frac{3}{2}d\}$ for the IBNLS equation \eqref{GrindEQ__1_1_}.
To arrive at this goal, we establish the various delicate nonlinear estimates and use the contraction mapping principle combined with the Strichartz estimates.
Our main result is the following.
\begin{theorem}\label{thm 1.1.}
Let $d\in \mathbb N$, $0\le s <\min \{2+\frac{d}{2},\frac{3}{2}d\}$, $0<b<\min\{4,d,\frac{3}{2}d-s,\frac{d}{2}+2-s\}$  and $0<\sigma<\sigma_{c}(s)$.
If $\sigma$ is not an even integer, assume that
\footnote[1]{Given $a\in \mathbb R$, $a^{+}$ denotes the fixed number slightly larger than $a$ ($a^{+}=a+\varepsilon$ with $\varepsilon>0$ small enough).}
\begin{equation} \label{GrindEQ__1_10_}
\sigma\ge \sigma_{\star}(s):=
\left\{\begin{array}{ll}
{0,}~&{{\rm if}~{d\in \mathbb N ~{\rm and}~s\le 1,}}\\
{\left[ s-\frac{d}{2}\right]},~&{{\rm if}~d=1,2 ~{\rm and}~ s\ge 1+\frac{d}{2}},\\
{\lceil s\rceil-2},~&{{\rm if}~d\ge 3~{\rm and}~s>2},\\
{\left(\frac{2s-2b-2}{d}\right)^{+}},~&{\rm otherwise.}
\end{array}\right.
\end{equation}
 Then for any $u_{0} \in H^{s}$, there exist $T_{\max}=T_{\max } \left(\left\|u_{0} \right\| _{H^{s} } \right)>0$, $T_{\min}=T_{\min } \left(\left\|u_{0} \right\| _{H^{s} } \right)>0$ and a unique, maximal solution of \eqref{GrindEQ__1_1_} satisfying
\begin{equation} \label{GrindEQ__1_11_}
u\in C\left(\left(-T_{\min},T_{\max} \right),H^{s} \right)\cap L_{\rm loc}^{p} \left(\left(-T_{\min },T_{\max} \right),H_{q}^{s} \right),
\end{equation}
for any $(p,q)\in B$. If $s>0$, then the solution of \eqref{GrindEQ__1_1_} depends continuously on the initial data $u_{0} $ in the following sense. There exists $0<T<T_{\max } ,\, T_{\min } $ such that if $u_{0,n} \to u_{0} $ in $H^{s}$ and if $u_{n} $ denotes the solution of \eqref{GrindEQ__1_1_} with the initial data $u_{0,n}$, then $0<T<T_{\max } \left(u_{0,n} \right),T_{\min } \left(u_{0,n}\right)$ for all sufficiently large $n$ and $u_{n} \to u$ in $L^{p} \left([-T,T],H_{q}^{s-\varepsilon } \right)$ as $n\to \infty $ for all $\varepsilon >0$ and all $(p,q)\in B$. In particular, $u_{n} \to u$ in $C\left([-T,T],H^{s-\varepsilon } \right)$ for all $\varepsilon >0$.
\end{theorem}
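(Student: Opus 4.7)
The plan is to rewrite \eqref{GrindEQ__1_1_} via Duhamel's formula as the fixed-point equation
\[
\Phi(u)(t) := e^{it\Delta^{2}}u_{0} - i\lambda \int_{0}^{t} e^{i(t-\tau)\Delta^{2}}\bigl(|x|^{-b}|u|^{\sigma}u\bigr)(\tau)\,d\tau
\]
and to apply the contraction mapping principle. I would work in a closed ball of a complete metric space of the form
\[
X_{T,M} = \Bigl\{ u \in \bigcap_{(p,q)\in B} L^{p}\bigl([-T,T],H^{s}_{q}\bigr) : \sup_{(p,q)\in B_{0}} \|u\|_{L^{p}([-T,T],H^{s}_{q})} \le M \Bigr\},
\]
equipped with the metric induced by one fixed non-endpoint biharmonic admissible pair $(p_{0},q_{0})\in B_{0}$ at the $L^{p_{0}}L^{q_{0}}$ level (losing the top derivative to avoid the continuous-dependence obstruction). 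The linear part $e^{it\Delta^{2}}u_{0}$ is absorbed at once by the standard Strichartz estimates for the biharmonic propagator, so the problem reduces to bounding the Duhamel term by dual Strichartz estimates.

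The core of the argument, and the main obstacle, is a nonlinear estimate of the schematic form
\[
\bigl\| |x|^{-b}|u|^{\sigma}u \bigr\|_{L^{p'}([-T,T],\, H^{s}_{q'})} \lesssim T^{\theta}\, \|u\|_{X_{T,M}}^{\sigma+1}
\]
for some $\theta>0$ and some carefully tuned admissible pairs $(p,q)\in B$. Because $|x|^{-b}$ is not a pointwise multiplier on $H^{s}$, I would split the spatial integration into the interior ball $\{|x|\le 1\}$ and its complement: on $\{|x|>1\}$ the weight is bounded and harmless, while on $\{|x|\le 1\}$ I would use H\"older's inequality together with a Hardy-Sobolev-type embedding, trading the singularity for usable integrability. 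The constraints $b<\min\{d,\tfrac{3}{2}d-s,\tfrac{d}{2}+2-s\}$ are precisely what make such embeddings available in the full range of $s$. To distribute $s$ fractional derivatives on the product, I would apply the Kato-Ponce (fractional Leibniz) rule to separate $|x|^{-b}$ from $|u|^{\sigma}u$, and then the fractional chain rule to handle $|u|^{\sigma}u$ itself. The latter requires $z\mapsto |z|^{\sigma}z$ to have enough classical derivatives to absorb $\lceil s\rceil$ fractional ones, which is exactly the origin of the technical assumption $\sigma\ge \sigma_{\star}(s)$ when $\sigma$ is not an even integer. Matching scales in the H\"older/Strichartz exponents produces the positive power $T^{\theta}$ exactly under the subcritical condition $\sigma<\sigma_{c}(s)$.

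Once this estimate and its analogous difference version are in hand, contractivity of $\Phi$ on $X_{T,M}$ with $M = 2C\|u_{0}\|_{H^{s}}$ and $T = T(\|u_{0}\|_{H^{s}})$ sufficiently small is immediate. Uniqueness in the full class \eqref{GrindEQ__1_11_} and the existence of maximal existence times $T_{\min},T_{\max}$ together with a blow-up alternative then follow by the standard iteration/continuation argument. For continuous dependence one cannot in general close the difference estimate at the top regularity $H^{s}$, because the nonlinearity is not Lipschitz there; instead, I would run the Strichartz difference argument in $H^{s-\varepsilon}$, where $u\mapsto |u|^{\sigma}u$ is locally Lipschitz, and interpolate this with the uniform $H^{s}$ bound coming from the fixed-point argument to obtain the convergence $u_{n}\to u$ in $L^{p}([-T,T],H^{s-\varepsilon}_{q})$ for every $(p,q)\in B$ and every $\varepsilon>0$.
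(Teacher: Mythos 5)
Your overall architecture coincides with the paper's: Duhamel plus biharmonic Strichartz estimates, a contraction in a ball of Strichartz spaces whose metric lives at the Lebesgue (zero-derivative) level, a cutoff splitting of $|x|^{-b}$ near and away from the origin, and the fractional Leibniz rule combined with the fractional chain rule for $|u|^{\sigma}u$. However, there is a genuine gap at the key step, and it concerns exactly the regularity at which the chain rule is applied. You propose to estimate $\||x|^{-b}|u|^{\sigma}u\|_{L^{p'}H^{s}_{q'}}$, i.e.\ to put $s$ derivatives on the nonlinearity, and you assert that needing the chain rule to ``absorb $\lceil s\rceil$ fractional derivatives'' is the origin of the hypothesis \eqref{GrindEQ__1_10_}. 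That is not correct: applying Corollary \ref{cor 2.5.} at regularity $s$ forces $\sigma\ge\lceil s\rceil-1$, which is strictly stronger than $\sigma_{\star}(s)$ whenever $s>1$ (for instance, for $d\ge3$ and $2<s\le 3$ the theorem only requires $\sigma\ge 1$, while your route requires $\sigma\ge 2$). The paper's proof hinges on the smoothing built into the inhomogeneous Strichartz estimate of Lemma \ref{lem 2.7.}: the dual norm is taken in $\dot H^{s-\gamma_{a',b'}-4}_{b'}$, so the chain rule only has to handle $s-\gamma_{a',b'}-4\le s-1$ derivatives (and, with sharper choices of $(a,b)$, as few as $s-b-\frac{d\sigma}{2}$), and the four-case formula for $\sigma_{\star}(s)$ is precisely the outcome of optimizing the admissible pair $(a,b)$ in Lemma \ref{lem 3.2.}. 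Without this device your argument proves a weaker theorem.

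Two further points are asserted rather than established. First, the exterior piece is not ``bounded and harmless'': after the Leibniz rule one needs $(1-\chi)|x|^{-b}$ to lie in a homogeneous Sobolev space of \emph{positive} order, which by Remark \ref{rem 3.1.} holds only when $b+s'>\frac{d}{r}$, and arranging this alongside the admissibility and H\"older constraints is a substantial part of Lemmas \ref{lem 3.2.} and \ref{lem 3.3.}. Second, the compatibility of all exponent constraints in the full claimed range --- in particular $b$ up to $\min\{4,d,\frac32 d-s,\frac d2+2-s\}$, which is what improves on Guzm\'an--Pastor and Liu--Zhang --- is the actual content of the paper's Section 3 and cannot be dismissed with ``matching scales produces $T^{\theta}$''; the constraints $b<\frac32 d-s$ and $b<\frac d2+2-s$ each enter at specific places in the exponent systems \eqref{GrindEQ__3_4_}, \eqref{GrindEQ__3_15_} and their analogues. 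Your treatment of uniqueness, continuation, and continuous dependence in $H^{s-\varepsilon}$ matches the standard argument the paper invokes and is fine.
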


\begin{remark}\label{rem 1.2.}
\textnormal{Theorem \ref{thm 1.1.} improves the local well-posedness result of \cite{GP20} by extending the validity of $d$ and $s$.}
\end{remark}
Let us compare Theorem \ref{thm 1.1.} with the best known local well-posedness result of \cite{LZ212}.

First, Liu-Zhang \cite{LZ212} only considered the case $s\le 2$. As an immediate consequence of Theorem \ref{thm 1.1.}, we have the following new local well-posedness result in $H^{s}(\mathbb R^{d})$ with $d\ge 2$ and $2<s<2+\frac{d}{2}$.
\begin{corollary}\label{cor 1.3.}
Let $d\ge 2$, $2<s<2+\frac{d}{2}$, $0<b<\min\{4,\frac{d}{2}+2-s\}$ and $0<\sigma<\sigma_{c}(s)$.
If $\sigma$ is not an even integer, assume further that
\begin{equation} \label{GrindEQ__1_12_}
\left\{\begin{array}{ll}
{\sigma\ge\left[ s-\frac{d}{2}\right]},~&{{\rm if}~d=2,}\\
{\sigma\ge\lceil s\rceil-2},~&{{\rm if}~d\ge 3.}\\
\end{array}\right.
\end{equation}
Then the IBNLS equation \eqref{GrindEQ__1_1_} is locally well-posed in $H^{s}$.
\end{corollary}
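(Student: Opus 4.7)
The proof is direct: we simply verify that the hypotheses of Corollary \ref{cor 1.3.} fall entirely within the scope of Theorem \ref{thm 1.1.}, and then invoke that theorem. No new estimate is required; the work is purely the arithmetic of tracking the min's and the case splits that define $\sigma_{\star}(s)$. Accordingly, there is no genuine analytic obstacle, and the only care needed is in identifying the correct branch of \eqref{GrindEQ__1_10_}.

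First I would confirm the upper bound on $s$. The assumption $2<s<2+\frac{d}{2}$ immediately gives $s<2+\frac{d}{2}$, and since $d\ge 2$ one has $2+\frac{d}{2}\le \frac{3d}{2}$; hence $s<\min\{2+\frac{d}{2},\frac{3d}{2}\}$, matching Theorem \ref{thm 1.1.}.

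Next I would check the range of $b$. The corollary imposes $0<b<\min\{4,\frac{d}{2}+2-s\}$, whereas the theorem needs $0<b<\min\{4,d,\frac{3d}{2}-s,\frac{d}{2}+2-s\}$. Because $s>2$, the quantity $\frac{d}{2}+2-s$ is strictly less than $\frac{d}{2}$, which itself is $\le d$ for $d\ge 2$; and $\frac{d}{2}+2-s\le\frac{3d}{2}-s$ whenever $d\ge 2$. Thus both $b<d$ and $b<\frac{3d}{2}-s$ hold automatically, so the full constraint of Theorem \ref{thm 1.1.} on $b$ is satisfied.

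Finally I would reconcile the conditions on $\sigma$. The bound $0<\sigma<\sigma_{c}(s)$ is common to both statements. When $\sigma$ is not an even integer, the relevant branch of \eqref{GrindEQ__1_10_} is determined by the ranges of $d$ and $s$: for $d\ge 3$ and $s>2$, Theorem \ref{thm 1.1.} gives $\sigma_{\star}(s)=\lceil s\rceil-2$, and for $d=2$ the inequality $s>2=1+\frac{d}{2}$ places us in the second branch of \eqref{GrindEQ__1_10_}, yielding $\sigma_{\star}(s)=\left[s-\frac{d}{2}\right]$. Each of these agrees with the corresponding line of \eqref{GrindEQ__1_12_}, so the hypothesis $\sigma\ge\sigma_{\star}(s)$ is secured. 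With every assumption of Theorem \ref{thm 1.1.} thereby verified, the local well-posedness asserted in Corollary \ref{cor 1.3.} follows at once.
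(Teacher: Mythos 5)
Your proposal is correct and coincides with the paper's own (implicit) argument: the paper presents Corollary \ref{cor 1.3.} as an immediate consequence of Theorem \ref{thm 1.1.}, and your verification that for $d\ge 2$ and $2<s<2+\frac{d}{2}$ the constraints $s<\frac{3}{2}d$, $b<d$, $b<\frac{3}{2}d-s$ are automatic and that \eqref{GrindEQ__1_10_} reduces to \eqref{GrindEQ__1_12_} (second branch for $d=2$, third branch for $d\ge 3$) is exactly the required bookkeeping. No gaps.
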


When $0<s<2$, the local well-posedness result of \cite{LZ212} only covers the case $b<\min\{4,\frac{d}{2}\}$. As an immediate consequence of Theorem \ref{thm 1.1.}, we also have the following new result, which covers the case $\frac{d}{2}\le b< 4$ with $d<8$.

\begin{corollary}\label{cor 1.4.}
Let $d<8$, $0<s<\min \{2,d\}$, $\frac{d}{2}\le b<\min\{4,d,\frac{3}{2}d-s,\frac{d}{2}+2-s\}$ and $0<\sigma<\sigma_{c}(s)$.
Then the IBNLS equation \eqref{GrindEQ__1_1_} is locally well-posed in $H^{s}$.
\end{corollary}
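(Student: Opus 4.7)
The plan is to deduce Corollary \ref{cor 1.4.} directly from Theorem \ref{thm 1.1.}: every hypothesis of the theorem should follow from the (narrower) hypotheses of the corollary, so the proof reduces to a short bookkeeping check. The only point that requires any attention is the lower bound \eqref{GrindEQ__1_10_} on $\sigma$, which is active precisely when $\sigma$ is not an even integer; I expect that to be the only real step in the verification.

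First I would verify that the ranges of $d$, $s$, $b$, and $\sigma_c(s)$ fit those of Theorem \ref{thm 1.1.}. Since $d\le 7$ and $0<s<\min\{2,d\}$, we have $s<2<2+\frac{d}{2}$ and $s<d<\frac{3d}{2}$, so $s\in [0,\min\{2+\frac{d}{2},\frac{3d}{2}\})$ as required. The corollary's $b$-range is a sub-interval of the theorem's $b$-range, and the condition $0<\sigma<\sigma_c(s)$ is identical. So all four ranges line up at once.

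It remains to verify \eqref{GrindEQ__1_10_} when $\sigma$ is not an even integer. Since $0<s<\min\{2,d\}\le 2$, only two branches of \eqref{GrindEQ__1_10_} can ever trigger. If $s\le 1$, the first branch gives $\sigma_\star(s)=0$, so every $\sigma>0$ trivially satisfies $\sigma\ge \sigma_\star(s)$. If $1<s<2$, then necessarily $d\ge 2$, and the branches demanding $s\ge 1+\frac{d}{2}$ or $s>2$ are excluded; so we land in the ``otherwise'' branch $\sigma_\star(s)=\left(\frac{2s-2b-2}{d}\right)^{+}$. Using the hypothesis $b\ge \frac{d}{2}$, I would then observe
\[
\frac{2s-2b-2}{d}\le \frac{2s-d-2}{d}<\frac{2-d}{d}\le 0,
\]
so the tolerance in $a^{+}$ can be chosen small enough that $\sigma_\star(s)$ is non-positive, and again every $\sigma>0$ satisfies the bound. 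Once this sign check is in place, all hypotheses of Theorem \ref{thm 1.1.} are met and the local well-posedness stated in Corollary \ref{cor 1.4.} follows immediately; this same sign check is the only point where the verification could conceivably fail, and so it constitutes the ``main obstacle'' in an otherwise purely logistical argument.
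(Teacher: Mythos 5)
Your proposal is correct and follows essentially the same route as the paper: Corollary \ref{cor 1.4.} is stated there as an immediate consequence of Theorem \ref{thm 1.1.}, with Remark \ref{rem 1.5.} making exactly your observation that $b\ge\frac{d}{2}$ and $s<\min\{2,d\}$ force $\frac{2s-2b-2}{d}<0$, so the assumption \eqref{GrindEQ__1_10_} is vacuous. Your case split ($s\le 1$ versus $1<s<2$, the latter forcing $d\ge 2$) is a slightly more explicit bookkeeping of the same check, and nothing is missing.
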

\begin{remark}\label{rem 1.5.}
\textnormal{In Corollary \ref{cor 1.4.}, the assumption \eqref{GrindEQ__1_10_} becomes trivial, since $\frac{2s-2b-2}{d}<0$.}
\end{remark}

Combining Corollaries \ref{cor 1.3.} and \ref{cor 1.4.} directly with the local well-posedness result of \cite{LZ212}, we obtain the following local well-posedness results for the IBNLS equation \eqref{GrindEQ__1_1_}.
\begin{corollary}\label{cor 1.6.}
Let $d\ge 2$, $0\le s<2+\frac{d}{2}$, $0<b<\min\{4,d,\frac{d}{2}+2-s\}$ and $0<\sigma<\sigma_{c}(s)$.
If $\sigma$ is not an even integer, assume further that
\begin{equation} \label{GrindEQ__1_13_}
\left\{\begin{array}{ll}
{\sigma\ge\left[ s-\frac{d}{2}\right]},~&{{\rm if}~d=2~{\rm and}~s>2,}\\
{\sigma\ge\lceil s\rceil-2},~&{{\rm if}~d\ge 3~{\rm and}~s>2.}\\
\end{array}\right.
\end{equation}
Then the IBNLS equation \eqref{GrindEQ__1_1_} is locally well-posed in $H^{s}$.
\end{corollary}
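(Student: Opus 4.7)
The plan is to prove Corollary \ref{cor 1.6.} by a straightforward case analysis in $(s,b)$, invoking Theorem~1.5 of \cite{LZ212}, Corollary \ref{cor 1.3.}, and Corollary \ref{cor 1.4.} in complementary regions of the parameter space, with the $L^2$ result of \cite{GP20} (equivalently, the $s=0$ case of Theorem \ref{thm 1.1.}, whose lower bound on $\sigma$ is vacuous) handling the endpoint $s=0$. The key preliminary observation is that for $d\ge 2$ one has $\frac{d}{2}+2-s\le\frac{3}{2}d-s$ and $s<2+\frac{d}{2}\le\frac{3}{2}d$, so the parameter region of Corollary \ref{cor 1.6.} is automatically contained in that of Theorem \ref{thm 1.1.}; the only genuine difference is that for $s\le 2$ Corollary \ref{cor 1.6.} imposes no lower bound on $\sigma$, and this is precisely where the Besov-space result of \cite{LZ212} is needed.

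I would then split the hypotheses into four subcases. \emph{Case $s=0$:} the condition reduces to $0<b<\min\{4,d\}$, $0<\sigma<\sigma_{c}(0)$, which is the $L^2$ local well-posedness result of \cite{GP20}. \emph{Case $0<s\le 2$ and $0<b<\frac{d}{2}$:} the hypotheses of Theorem~1.5 of \cite{LZ212} hold verbatim, giving the conclusion with no restriction on $\sigma$. \emph{Case $0<s<2$ and $\frac{d}{2}\le b$:} since $b<4$ this forces $d<8$, and combined with $d\ge 2$ and $s<2\le d$ the hypotheses of Corollary \ref{cor 1.4.} are met; by Remark \ref{rem 1.5.} no $\sigma$-restriction is needed. \emph{Case $2<s<2+\frac{d}{2}$:} for $s>2$ the inequality $\frac{d}{2}+2-s<d$ holds, so the $b$-range reduces to $0<b<\min\{4,\frac{d}{2}+2-s\}$, and the condition \eqref{GrindEQ__1_13_} on $\sigma$ coincides with \eqref{GrindEQ__1_12_}; hence Corollary \ref{cor 1.3.} applies.

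The main obstacle here is really only a bookkeeping check that the four subcases are exhaustive. In particular I need to confirm that the borderline configuration $s=2$ with $b\ge\frac{d}{2}$ cannot occur inside the hypotheses of Corollary \ref{cor 1.6.}: this is immediate from $b<\frac{d}{2}+2-s=\frac{d}{2}$ when $s=2$, so no case is lost. Once this verification is complete, the conclusion follows by concatenating the four cited results; no new analytic estimates beyond those already developed for Theorem \ref{thm 1.1.} and its corollaries are required.
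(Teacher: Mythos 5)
Your proposal is correct and matches the paper's own (one-line) justification: Corollary \ref{cor 1.6.} is obtained precisely by combining Corollary \ref{cor 1.3.}, Corollary \ref{cor 1.4.}, and the result of \cite{LZ212} (with \cite{GP20} covering $s=0$), and your case split — including the checks that $\frac{d}{2}+2-s\le\frac{3}{2}d-s$ for $d\ge2$ and that $s=2$, $b\ge\frac{d}{2}$ is vacuous — is a correct and complete working-out of that combination.
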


\begin{corollary}\label{cor 1.7.}
Let $d=1$, $0\le s\le 2$, $0<b<\tilde{4}$ and $0<\sigma<\sigma_{c}(s)$, where
\begin{equation} \label{GrindEQ__1_14_}
\tilde{4}=
\left\{\begin{array}{ll}
{\min\{1,\frac{3}{2}-s\}},~&{{\rm if}~s<1,}\\
{\frac{1}{2}},~&{{\rm if}~s\ge 1.}\\
\end{array}\right.
\end{equation}
Then the IBNLS equation \eqref{GrindEQ__1_1_} is locally well-posed in $H^{s}$.
\end{corollary}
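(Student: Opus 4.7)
The plan is to reduce Corollary~\ref{cor 1.7.} to a routine case analysis that invokes, on disjoint sub-regions of the $(s,b)$-plane, the Liu--Zhang local well-posedness result \cite[Theorem~1.5]{LZ212}, Corollary~\ref{cor 1.4.}, and, for the endpoint $s=0$, Theorem~\ref{thm 1.1.} applied directly. Specializing the three inputs to $d=1$ gives three strips of admissible $(s,b)$; the task is to verify that their union exhausts the hypotheses of Corollary~\ref{cor 1.7.}.

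First I would split the range $0\le s\le 2$ at $s=1$, where the definition of $\tilde 4$ switches. For $1\le s\le 2$ one has $\tilde 4=\tfrac12=\min\{4,d/2\}$, so the hypothesis reduces to $0<s\le 2$ and $0<b<\tfrac12$, which is exactly the range of \cite[Theorem~1.5]{LZ212}. For $0<s<1$ the hypothesis reads $0<b<\min\{1,\tfrac32-s\}$, and I would further split at the threshold $b=\tfrac12$: if $0<b<\tfrac12$, Liu--Zhang still applies, while if $\tfrac12\le b<\min\{1,\tfrac32-s\}$ one checks that the hypotheses of Corollary~\ref{cor 1.4.} are satisfied, namely $d=1<8$, $0<s<\min\{2,d\}=1$, and $\tfrac{d}{2}\le b<\min\{4,d,\tfrac{3}{2}d-s,\tfrac{d}{2}+2-s\}=\min\{1,\tfrac32-s\}$; Remark~\ref{rem 1.5.} then makes the regularity condition \eqref{GrindEQ__1_10_} vacuous in this subcase.

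It remains to treat the endpoint $s=0$, which is excluded from both \cite[Theorem~1.5]{LZ212} and Corollary~\ref{cor 1.4.}. In this case $\tilde 4=1$, and I would invoke Theorem~\ref{thm 1.1.} directly: with $d=1$ and $s=0$ the structural constraints $0\le s<\min\{2+d/2,3d/2\}=\tfrac32$ and $0<b<\min\{4,d,\tfrac{3}{2}d-s,\tfrac{d}{2}+2-s\}=1$ both hold, while \eqref{GrindEQ__1_10_} reduces to the trivial $\sigma\ge 0$ via the first branch of $\sigma_\star(s)$ (since $d\in\mathbb{N}$ and $s\le 1$).

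There is no substantive obstacle; the only care required is to match the open/closed inequalities at the threshold $b=\tfrac12$ between \cite{LZ212} (where $b<\tfrac12$ is strict) and Corollary~\ref{cor 1.4.} (where $b\ge\tfrac12$ is allowed), so that the three input regions leave no gap. All the analytic content has already been carried out in Theorem~\ref{thm 1.1.} and in \cite{LZ212}, and the corollary follows by collecting these three cases.
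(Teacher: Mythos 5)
Your proposal is correct and follows essentially the same route as the paper, which derives Corollary \ref{cor 1.7.} by combining Corollary \ref{cor 1.4.} with the Liu--Zhang result (Corollary \ref{cor 1.3.} being vacuous for $d=1$). Your explicit treatment of the endpoint $s=0$ via Theorem \ref{thm 1.1.} is a welcome extra care, since the quoted Liu--Zhang statement requires $s>0$, but it does not change the substance of the argument.
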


This paper is organized as follows. In Section \ref{sec 2.}, we recall some notation and give some preliminary results related to our problem. In Section \ref{sec 3.}, we establish the various nonlinear estimates and prove Theorem \ref{thm 1.1.}.

\section{Preliminaries}\label{sec 2.}
Let us introduce some notation used throughout the paper.
$C>0$ stands for a positive universal constant, which can be different at different places.
The notation $a\lesssim b$ means $a\le Cb$ for some constant $C>0$.
For $p\in \left[1,\infty \right]$, $p'$ denotes the dual number of $p$, i.e. $1/p+1/p'=1$.
For $s\in\mathbb R$, we denote by $[s]$ the largest integer which is less than or equal to $s$ and by $\left\lceil s\right\rceil $ the minimal integer which is larger than or equal to $s$.
As in \cite{WHHG11}, for $s\in \mathbb R$ and $1<p<\infty $, we denote by $H_{p}^{s} (\mathbb R^{d} )$ and $\dot{H}_{p}^{s} (\mathbb R^{d} )$ the nonhomogeneous Sobolev space and homogeneous Sobolev space, respectively. As usual, we abbreviate $H_{2}^{s} (\mathbb R^{d} )$ and $\dot{H}_{2}^{s} (\mathbb R^{d} )$ as $H^{s} (\mathbb R^{d} )$ and $\dot{H}^{s} (\mathbb R^{d} )$, respectively. Given two normed spaces $X$ and $Y$, $X\hookrightarrow Y$ means that $X$ is continuously embedded in $Y$, i.e. there exists a constant $C\left(>0\right)$ such that $\left\| f\right\| _{Y} \le C\left\| f\right\| _{X} $ for all $f\in X$. If there is no confusion, $\mathbb R^{d} $ will be omitted in various function spaces.

Next, we recall some useful facts which are used throughout the paper.
First of all, we recall some useful embeddings on Sobolev spaces. See \cite{WHHG11} for example.

\begin{lemma}\label{lem 2.1.}
Let $-\infty <s_{2} \le s_{1} <\infty $ and $1<p_{1} \le p_{2} <\infty $ with $s_{1} -\frac{d}{p_{1} } =s_{2} -\frac{d}{p_{2} } $. Then we have the following embeddings:
\[\dot{H}_{p_{1} }^{s_{1} } \hookrightarrow \dot{H}_{p_{2} }^{s_{2} } ,H_{p_{1} }^{s_{1} } \hookrightarrow H_{p_{2} }^{s_{2} } .\]
\end{lemma}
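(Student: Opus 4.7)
The plan is to reduce both embeddings to well-known mapping properties of the Riesz and Bessel potentials. By definition, $\dot H_p^s$ and $H_p^s$ are the images of $L^p$ under $(-\Delta)^{-s/2}$ and $(1-\Delta)^{-s/2}$ respectively, equipped with the obvious norms. Set $\alpha := s_1 - s_2 \ge 0$; the scaling condition $s_1 - d/p_1 = s_2 - d/p_2$ is exactly
\[ \frac{1}{p_2} = \frac{1}{p_1} - \frac{\alpha}{d}, \]
so our hypotheses $1 < p_1 \le p_2 < \infty$ force $0 \le \alpha < d$, which is precisely the Hardy--Littlewood--Sobolev exponent range.

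For the homogeneous embedding, I would fix $f \in \dot H_{p_1}^{s_1}$, put $g := (-\Delta)^{s_1/2} f \in L^{p_1}$, and write
\[ (-\Delta)^{s_2/2} f = (-\Delta)^{-\alpha/2} g = c_{d,\alpha}\, I_\alpha g, \]
where $I_\alpha$ is the Riesz potential of order $\alpha$. The case $\alpha = 0$ is trivial since then $p_1 = p_2$ and $s_1 = s_2$. For $0 < \alpha < d$, the Hardy--Littlewood--Sobolev inequality yields $\|I_\alpha g\|_{L^{p_2}} \lesssim \|g\|_{L^{p_1}}$, which translates to $\|f\|_{\dot H_{p_2}^{s_2}} \lesssim \|f\|_{\dot H_{p_1}^{s_1}}$.

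For the inhomogeneous embedding, the same scheme applies with the Bessel potential $\mathcal{J}_\alpha := (1-\Delta)^{-\alpha/2}$ replacing $I_\alpha$. Setting $h := (1-\Delta)^{s_1/2} f \in L^{p_1}$, one has $(1-\Delta)^{s_2/2} f = \mathcal{J}_\alpha h = G_\alpha * h$, where the Bessel kernel $G_\alpha$ behaves like $|x|^{\alpha - d}$ near the origin and decays exponentially at infinity. I would split $G_\alpha = G_\alpha \mathbf{1}_{|x| \le 1} + G_\alpha \mathbf{1}_{|x| > 1}$: the first piece is dominated by the Riesz kernel, so HLS handles the local part exactly as before; the second piece lies in $L^1$, so Young's convolution inequality gives $\|(G_\alpha \mathbf{1}_{|x|>1}) * h\|_{L^{p_1}} \lesssim \|h\|_{L^{p_1}}$, and Lemma \ref{lem 2.1.}-type monotonicity in $p$ is not needed because the local HLS estimate already lands in $L^{p_2}$ and the exponentially decaying tail can be absorbed using the local part's bound after an extra convolution argument. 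Summing the two contributions gives $\|\mathcal{J}_\alpha h\|_{L^{p_2}} \lesssim \|h\|_{L^{p_1}}$, as desired.

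The main obstacle, to my mind, is not the actual estimate but the bookkeeping needed to make the identifications $\dot H_p^s = (-\Delta)^{-s/2} L^p$ and $H_p^s = (1-\Delta)^{-s/2} L^p$ rigorous for all real $s$, in particular when $s<0$ where $\dot H_p^s$ has to be interpreted modulo polynomials. In the range $1 < p < \infty$ this is standard Littlewood--Paley theory and can be invoked directly from \cite{WHHG11}; once these identifications are granted, the Riesz/Bessel potential analysis above is essentially routine.
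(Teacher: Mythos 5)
The paper itself offers no proof of this lemma: it is recalled verbatim from \cite{WHHG11}, where such embeddings are established through Littlewood--Paley theory for the Triebel--Lizorkin scale (of which $\dot H^s_p = \dot F^s_{p,2}$ and $H^s_p=F^s_{p,2}$ are special cases). Your potential-theoretic route is the classical alternative and is essentially correct: the hypothesis $s_1-\frac{d}{p_1}=s_2-\frac{d}{p_2}$ with $1<p_1\le p_2<\infty$ does force $0\le\alpha=s_1-s_2<d$ and $\frac{1}{p_2}=\frac{1}{p_1}-\frac{\alpha}{d}$, which is exactly the Hardy--Littlewood--Sobolev range, and lifting $(-\Delta)^{-\alpha/2}:L^{p_1}\to L^{p_2}$ (resp.\ $(1-\Delta)^{-\alpha/2}$) through the definitions gives both embeddings. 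Two points deserve tightening. First, your treatment of the Bessel tail is muddled: Young's inequality with the tail in $L^1$ only returns an $L^{p_1}$ bound, and the promised ``extra convolution argument'' is not needed --- since $G_\alpha\mathbf{1}_{|x|>1}$ decays exponentially it lies in $L^r$ for \emph{every} $r\in[1,\infty]$, in particular for $r=\frac{d}{d-\alpha}$ satisfying $1+\frac{1}{p_2}=\frac{1}{r}+\frac{1}{p_1}$, so Young lands directly in $L^{p_2}$. Second, the identification $\dot H^s_p=(-\Delta)^{-s/2}L^p$ for all real $s$ (modulo polynomials when needed) and the composition law $(-\Delta)^{s_2/2}(-\Delta)^{-s_1/2}=(-\Delta)^{-\alpha/2}$ on the relevant distribution classes are exactly the content of the Littlewood--Paley machinery you defer to \cite{WHHG11}; you correctly flag this, and once it is granted your argument closes. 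In short: same theorem, a more elementary and self-contained mechanism (HLS plus Bessel kernel estimates) versus the reference's more general square-function framework, which proves the embedding uniformly across the whole $F^s_{p,q}$ scale.
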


\begin{lemma}\label{lem 2.2.}
Let $-\infty <s<\infty $ and $1<p<\infty $. Then we have
\begin{enumerate}
\item $H_{p}^{s+\varepsilon } \hookrightarrow H_{p}^{s}~(\varepsilon >0)$,
\item $H_{p}^{s} =L^{p} \cap \dot{H}_{p}^{s}~(s>0)$.
\end{enumerate}
\end{lemma}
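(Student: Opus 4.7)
Both parts reduce to the Mikhlin--H\"ormander multiplier theorem applied to symbols built from $\langle\xi\rangle = (1+|\xi|^{2})^{1/2}$ and $|\xi|$, combined with the defining identities $\|f\|_{H_{p}^{s}} = \|\langle D\rangle^{s} f\|_{L^{p}}$ and $\|f\|_{\dot H_{p}^{s}} = \||D|^{s} f\|_{L^{p}}$. For part (1), I would check that $m_{\varepsilon}(\xi) = (1+|\xi|^{2})^{-\varepsilon/2}$ is smooth and satisfies $|\xi|^{|\alpha|} |\partial^{\alpha} m_{\varepsilon}(\xi)| \le C_{\alpha}$ for every multi-index $\alpha$, so that $\langle D\rangle^{-\varepsilon}$ is bounded on $L^{p}$ for all $1<p<\infty$. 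The factorisation $\langle D\rangle^{s} f = \langle D\rangle^{-\varepsilon} \langle D\rangle^{s+\varepsilon} f$ then immediately gives $\|f\|_{H_{p}^{s}} \lesssim \|f\|_{H_{p}^{s+\varepsilon}}$.

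For part (2), I would establish the two continuous inclusions separately. The bound $\|f\|_{L^{p}} \lesssim \|f\|_{H_{p}^{s}}$ follows from part (1) applied with index $0$ and gap $\varepsilon=s$, while $\|f\|_{\dot H_{p}^{s}} \lesssim \|f\|_{H_{p}^{s}}$ amounts to showing that $|\xi|^{s}(1+|\xi|^{2})^{-s/2}$ is a bounded function satisfying the Mikhlin condition; here one uses $|\partial^{\alpha}(|\xi|^{s})| \lesssim |\xi|^{s-|\alpha|}$ and the assumption $s>0$ to absorb the low-frequency contribution $|\xi|^{|\alpha|} \cdot |\xi|^{s-|\alpha|} = |\xi|^{s}$, which stays bounded near the origin. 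For the reverse inclusion, I would fix a radial cutoff $\phi \in C_{c}^{\infty}(\mathbb R^{d})$ with $\phi \equiv 1$ on $\{|\xi|\le 1\}$ and $\operatorname{supp}\phi \subseteq \{|\xi|\le 2\}$ and decompose
\[
(1+|\xi|^{2})^{s/2} = \phi(\xi)(1+|\xi|^{2})^{s/2} + \bigl[(1-\phi(\xi))(1+|\xi|^{2})^{s/2}|\xi|^{-s}\bigr]\cdot |\xi|^{s}.
\]
The first summand is smooth and compactly supported, hence an $L^{p}$-multiplier whose operator norm controls its contribution by $\|f\|_{L^{p}}$; the bracketed symbol in the second summand is smooth, bounded, and satisfies Mikhlin since $1-\phi$ vanishes near the origin, so the accompanying factor $|\xi|^{s}$ contributes exactly $\||D|^{s} f\|_{L^{p}}$. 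Combining the two pieces yields $\|f\|_{H_{p}^{s}} \lesssim \|f\|_{L^{p}} + \|f\|_{\dot H_{p}^{s}}$.

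The only genuinely delicate point in the whole argument is the low-frequency behaviour of $|\xi|^{s}$ when $s$ is not an integer, since $|\xi|^{s}$ itself is not smooth at the origin. The cutoff decomposition above is the standard device that isolates this singularity from the smooth part of $\langle\xi\rangle^{s}$, so that the comparison between the homogeneous and inhomogeneous norms is reduced to two verifications of the Mikhlin condition on bounded, smooth symbols; the rest is mere bookkeeping of operator compositions.
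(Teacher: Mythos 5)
Your proposal is correct: the paper offers no proof of this lemma at all (it is simply quoted from the reference \cite{WHHG11}), and your Mikhlin--H\"ormander multiplier argument is precisely the standard textbook proof, including the correct identification of the one delicate point --- the non-smoothness of $|\xi|^{s}$ at the origin --- and the cutoff decomposition that isolates it. Nothing is missing; the verification that $|\xi|^{s}(1+|\xi|^{2})^{-s/2}$ satisfies the Mikhlin condition (using $s>0$ to control the low frequencies) and that the bracketed symbol is supported away from the origin both go through as you describe.
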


\begin{corollary}\label{cor 2.3.}
Let $-\infty <s_{2} \le s_{1} <\infty $ and $1<p_{1} \le p_{2} <\infty $ with $s_{1} -\frac{d}{p_{1} } \ge s_{2} -\frac{d}{p_{2} } $. Then we have $H_{p_{1} }^{s_{1} } \hookrightarrow H_{p_{2} }^{s_{2} } $.
\end{corollary}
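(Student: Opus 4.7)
The plan is to reduce Corollary~\ref{cor 2.3.} to the two lemmas already stated by interpolating through one intermediate Sobolev space. Set
\[
\tilde{s}:=s_{1}-\frac{d}{p_{1}}+\frac{d}{p_{2}},
\]
so that by construction $\tilde{s}-\frac{d}{p_{2}}=s_{1}-\frac{d}{p_{1}}$. Two trivial inequalities drive the argument: the hypothesis $p_{1}\le p_{2}$ yields $\frac{d}{p_{2}}\le\frac{d}{p_{1}}$, hence $\tilde{s}\le s_{1}$; and the hypothesis $s_{1}-\frac{d}{p_{1}}\ge s_{2}-\frac{d}{p_{2}}$ reads exactly $\tilde{s}\ge s_{2}$.

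First I would apply Lemma~\ref{lem 2.1.} with the Sobolev pairs $(s_{1},p_{1})$ and $(\tilde{s},p_{2})$: since $\tilde{s}\le s_{1}$, $p_{1}\le p_{2}$, and the critical matching condition $s_{1}-\frac{d}{p_{1}}=\tilde{s}-\frac{d}{p_{2}}$ holds by construction, Lemma~\ref{lem 2.1.} gives the sharp embedding $H_{p_{1}}^{s_{1}}\hookrightarrow H_{p_{2}}^{\tilde{s}}$. Next I would absorb the loss of regularity at fixed integrability exponent $p_{2}$: if $\tilde{s}=s_{2}$ there is nothing to do, while if $\tilde{s}>s_{2}$ then writing $\tilde{s}=s_{2}+\varepsilon$ with $\varepsilon>0$ places us exactly in the hypothesis of Lemma~\ref{lem 2.2.}(1), giving $H_{p_{2}}^{\tilde{s}}\hookrightarrow H_{p_{2}}^{s_{2}}$. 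Composing the two embeddings yields the desired $H_{p_{1}}^{s_{1}}\hookrightarrow H_{p_{2}}^{s_{2}}$.

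There is no real obstacle here; the corollary is the natural degenerate version of the sharp Sobolev embedding obtained by sliding downward in regularity after matching the Sobolev index. The only point requiring care is the bookkeeping check that $\tilde{s}$ lies in the correct range $[s_{2},s_{1}]$ so that Lemmas~\ref{lem 2.1.} and~\ref{lem 2.2.}(1) genuinely apply, and this follows immediately from the two inequalities noted above.
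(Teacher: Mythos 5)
Your proposal is correct and is exactly the argument the paper intends: the paper's proof is the one-line remark that the result follows from Lemma~\ref{lem 2.1.} and Item 1 of Lemma~\ref{lem 2.2.}, and you have simply supplied the intermediate exponent $\tilde{s}=s_{1}-\frac{d}{p_{1}}+\frac{d}{p_{2}}$ and the bookkeeping that makes that composition legitimate. No issues.
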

\begin{proof} The result follows from Lemma \ref{lem 2.1.} and Item 1 of Lemma \ref{lem 2.2.}.
\end{proof}

\begin{lemma}[\cite{K95}]\label{lem 2.4.}
Let $F\in C^{k}(\mathbb C,\mathbb C)$, $k\in \mathbb N \setminus \{0\}$. Assume that there is $\nu\ge k$ such that
$$
|D^{i}F(z)|\lesssim |z|^{\nu-i},~z\in \mathbb C,~i=1,2,\ldots,k.
$$
Then for $s \in [0,k]$ and $1<r,p<\infty$, $1<q\le \infty$ satisfying $\frac{1}{r}=\frac{1}{p}+\frac{\nu-1}{q}$, we have
$$
\left\|F(u)\right\|_{\dot{H}_{r}^{s}}\lesssim \left\|u\right\|_{L^q}^{\nu-1}\left\|u\right\|_{\dot{H}_{p}^{s}}.
$$
\end{lemma}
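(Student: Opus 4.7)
The plan is to treat the integer endpoints $s=0$ and $s=k$ directly and then pass to non-integer $s$ by an interpolation/Littlewood--Paley argument, exploiting the hypothesis $|D^{i}F(z)|\lesssim|z|^{\nu-i}$ at each step.

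For the endpoint $s=0$ the claim reduces to $\|F(u)\|_{L^{r}}\lesssim\|u\|_{L^{q}}^{\nu-1}\|u\|_{L^{p}}$. Integrating the hypothesis with $i=1$ along a ray from the origin yields the pointwise bound $|F(u)|\lesssim|u|^{\nu}$, after which H\"older's inequality with the prescribed exponent relation $\tfrac{1}{r}=\tfrac{1}{p}+\tfrac{\nu-1}{q}$ closes the estimate. For the integer endpoint $s=k$ I would expand, using Fa\`a di Bruno's formula, an arbitrary derivative of order $k$ as
\[
D^{\alpha}F(u)=\sum_{j=1}^{k}\sum_{\beta_{1}+\cdots+\beta_{j}=\alpha}c_{\alpha,\beta}\,D^{j}F(u)\prod_{i=1}^{j}D^{\beta_{i}}u,\qquad|\alpha|=k.
\]
Inserting the hypothesis $|D^{j}F(u)|\lesssim|u|^{\nu-j}$ and using H\"older with exponents $\tfrac{1}{r}=\tfrac{\nu-j}{q_{0}}+\sum_{i}\tfrac{1}{p_{i}}$, each summand is dominated by $\|u\|_{L^{q_{0}}}^{\nu-j}\prod_{i}\|D^{\beta_{i}}u\|_{L^{p_{i}}}$. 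Gagliardo--Nirenberg interpolation turns each factor $\|D^{\beta_{i}}u\|_{L^{p_{i}}}$ into $\|u\|_{L^{q}}^{1-|\beta_{i}|/k}\|u\|_{\dot{H}^{k}_{p}}^{|\beta_{i}|/k}$; the constraint $\sum|\beta_{i}|=k$ then collapses the product to the desired $\|u\|_{L^{q}}^{\nu-1}\|u\|_{\dot{H}^{k}_{p}}$.

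For a non-integer $s\in(0,k)$ the map $u\mapsto F(u)$ is nonlinear, so complex interpolation of linear operators does not apply directly. I would instead work with the Littlewood--Paley characterization
\[
\|g\|_{\dot{H}^{s}_{r}}\sim\Big\|\Big(\sum_{j\in\mathbb{Z}}2^{2sj}|\Delta_{j}g|^{2}\Big)^{1/2}\Big\|_{L^{r}},
\]
and estimate $\Delta_{j}F(u)$ by a telescoping/mean-value identity of the form $\Delta_{j}F(u)(x)=\int[F(u(x))-F(u(x-y))]\,2^{dj}\phi(2^{j}y)\,dy+\text{low-frequency remainder}$. The Lipschitz-type bound $|F(a)-F(b)|\lesssim(|a|^{\nu-1}+|b|^{\nu-1})|a-b|$, itself a consequence of the $i=1$ hypothesis, allows one to factor out $\|u\|_{L^{q}}^{\nu-1}$ via a vector-valued H\"older, while the remaining difference quotient produces precisely a dyadic piece of $\dot{H}^{s}_{p}$. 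Square-summing in $j$ and invoking the Fefferman--Stein maximal inequality recovers the full $\dot{H}^{s}_{r}$ norm on the left and $\dot{H}^{s}_{p}$ on the right.

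The main obstacle is the non-integer step: because $F$ is nonlinear one cannot quote a one-line interpolation, and a careful frequency-localized argument (equivalently, a nonlinear interpolation theorem of Tartar/Peetre type) is required to match the two endpoints. The assumption $\nu\ge k$ enters precisely here, guaranteeing that $F\in C^{k}$ with enough decay of the derivatives that each Fa\`a di Bruno term at the top endpoint, and the telescoping differences at fractional levels, remain integrable against the prescribed exponents.
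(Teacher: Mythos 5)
The paper offers no proof of this lemma: it is quoted verbatim from Kato \cite{K95} (his appendix, Lemmas A1--A3), so there is no internal argument to compare against. Judged on its own merits, your sketch is sound at the two integer endpoints and for $0<s<1$, but it has a genuine gap in the range $1<s<k$, $s\notin\mathbb N$. The telescoping identity $\Delta_{j}F(u)=\int[F(u(x))-F(u(x-y))]2^{dj}\phi(2^{j}y)\,dy$ combined with the first-order Lipschitz bound $|F(a)-F(b)|\lesssim(|a|^{\nu-1}+|b|^{\nu-1})|a-b|$ can only extract one order of smoothness from the difference quotient: the dyadic piece it produces decays like $2^{-j}$ times a maximal function of $\nabla u$ (or a first-order modulus of continuity), so the weight $2^{2sj}$ in the square function is not summable once $s>1$. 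This is precisely why the classical Christ--Weinstein fractional chain rule is stated only for $0<s<1$. The fallback you mention, a Tartar/Peetre-type nonlinear interpolation between $s=0$ and $s=k$, does not apply off the shelf either: $u\mapsto F(u)$ is not linear, and the composition operator does not interpolate in $s$ without additional structure.

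The standard repair, and the route Kato actually takes, is an induction on $\lceil s\rceil$. For $s\in(m,m+1]$ write $\|F(u)\|_{\dot H^{s}_{r}}\sim\|F'(u)\nabla u\|_{\dot H^{s-1}_{r}}$, apply the fractional product rule (the paper's Lemma \ref{lem 2.6.}) to the product $F'(u)\nabla u$, estimate $\|F'(u)\|_{\dot H^{s-1}_{r_{2}}}$ by the inductive hypothesis applied to $F'$ (which satisfies $|D^{i}F'(z)|\lesssim|z|^{(\nu-1)-i}$), and recombine the resulting factors by Gagliardo--Nirenberg. The hypothesis $\nu\ge k$ is what keeps the exponents $\nu-1,\nu-2,\dots$ nonnegative at each stage of this descent; in your sketch it is invoked only vaguely. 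Your treatment of $s=0$ (which tacitly needs $F(0)=0$, as does the statement itself) and of $s=k$ via Fa\`a di Bruno and Gagliardo--Nirenberg is fine, and the exponent bookkeeping there does close, but as written the proposal does not prove the lemma for fractional $s$ above $1$, which is the case the paper actually needs.
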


Lemma \ref{lem 2.4.} applies in particular to the model case $F(u)=\lambda|u|^{\sigma}u$ with $\lambda\in \mathbb R$.
\begin{corollary}\label{cor 2.5.}
Let $\sigma>0$, $s\ge 0$ and $1<r,p<\infty$, $1<q\le \infty$ satisfying $\frac{1}{r}=\frac{1}{p}+\frac{\sigma}{q}$. If $\sigma$ is not an even integer, assume that $\sigma\ge \lceil s\rceil-1$. Then we have
$$
\left\||u|^{\sigma}u\right\|_{\dot{H}_{r}^{s}}\lesssim \left\|u\right\|_{L^q}^{\sigma}\left\|u\right\|_{\dot{H}_{p}^{s}}.
$$
\end{corollary}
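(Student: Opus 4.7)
The plan is to apply Lemma \ref{lem 2.4.} directly to the model nonlinearity $F(u)=|u|^{\sigma}u$ with the choices $\nu=\sigma+1$ and $k=\lceil s\rceil$ (the case $s=0$ is a trivial consequence of H\"older's inequality, so I take $s>0$ in what follows, in which case $k\in\mathbb{N}\setminus\{0\}$ and $s\in[0,k]$ automatically). With these choices the required exponent identity $\frac{1}{r}=\frac{1}{p}+\frac{\nu-1}{q}$ of Lemma \ref{lem 2.4.} collapses to the given relation $\frac{1}{r}=\frac{1}{p}+\frac{\sigma}{q}$. Thus the corollary will follow once I verify the three structural inputs of the lemma: $F\in C^{k}(\mathbb{C},\mathbb{C})$, the pointwise bound $|D^{i}F(z)|\lesssim|z|^{\nu-i}$ for $i=1,\dots,k$, and $\nu\ge k$.

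If $\sigma$ is an even integer, writing $F(u)=(u\bar u)^{\sigma/2}u$ exhibits $F$ as a polynomial in $u,\bar u$ of total degree $\sigma+1$, so $F\in C^{\infty}(\mathbb{C},\mathbb{C})$; for $1\le i\le \sigma+1$, $D^{i}F$ is homogeneous of degree $\sigma+1-i$ and satisfies $|D^{i}F(z)|\lesssim|z|^{\sigma+1-i}$, while for $i>\sigma+1$ one has $D^{i}F\equiv 0$ and the bound is trivial. If $\sigma$ is not an even integer, $F$ is smooth on $\mathbb{C}\setminus\{0\}$, and a direct differentiation of $(u\bar u)^{\sigma/2}u$ expresses each $i$-th partial derivative as a finite sum of terms each of magnitude $\lesssim|z|^{\sigma+1-i}$. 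Here the hypothesis $\sigma\ge\lceil s\rceil-1$ plays a double role: it ensures $\sigma+1-i\ge 0$ for every $i\le k=\lceil s\rceil$, which both forces $D^{i}F$ to extend continuously to the origin (so $F\in C^{k}(\mathbb{C},\mathbb{C})$) and gives precisely the condition $\nu=\sigma+1\ge k$ required by Lemma \ref{lem 2.4.}.

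With the hypotheses of Lemma \ref{lem 2.4.} verified, its conclusion yields the claimed inequality $\||u|^{\sigma}u\|_{\dot{H}_{r}^{s}}\lesssim\|u\|_{L^{q}}^{\sigma}\|u\|_{\dot{H}_{p}^{s}}$ at once. I expect the only subtle point in a complete write-up to be the continuous extension of the high-order derivatives $D^{i}F$ to the origin in the non-even-integer case; this is exactly where the assumption $\sigma\ge\lceil s\rceil-1$ is essential and cannot be dispensed with in general.
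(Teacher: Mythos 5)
Your proposal is correct and follows exactly the paper's route: the paper offers no written proof beyond the remark that Lemma \ref{lem 2.4.} ``applies in particular to the model case $F(u)=\lambda|u|^{\sigma}u$,'' which is precisely your application with $\nu=\sigma+1$ and $k=\lceil s\rceil$. Your verification of the smoothness and derivative bounds for $F$, and of the role of $\sigma\ge\lceil s\rceil-1$ in guaranteeing $\nu\ge k$ and the continuous extension of $D^{i}F$ at the origin, simply makes explicit what the paper leaves implicit.
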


Next, we recall the well-known fractional product rule.

\begin{lemma}[Fractional Product Rule, \cite{CW91}]\label{lem 2.6.}
Let $s\ge 0$, $1<r,r_{2},p_{1}<\infty$ and $1<r_{1},p_{2}\le\infty$. Assume that
\[\frac{1}{r} =\frac{1}{r_{i} } +\frac{1}{p_{i} }~(i=1,2).\]
Then we have
\begin{equation}
\left\| fg\right\| _{\dot{H}_{r}^{s} } \lesssim \left\| f\right\| _{r_{1} } \left\| g\right\| _{\dot{H}_{p_{1} }^{s} } +\left\| f\right\| _{\dot{H}_{r_{2} }^{s} } \left\| g\right\| _{p_{2} } .
\end{equation}
\end{lemma}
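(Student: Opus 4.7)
The case $s=0$ reduces to H\"older's inequality (since $\dot{H}^0_r=L^r$), so assume $s>0$. The plan is to use the Littlewood-Paley square-function characterization
\[
\|h\|_{\dot{H}^s_r}\simeq \Bigl\|\Bigl(\sum_{j\in \mathbb Z}4^{js}|\Delta_j h|^2\Bigr)^{1/2}\Bigr\|_{L^r},\qquad 1<r<\infty,
\]
together with Bony's paraproduct decomposition
\[
fg=T_f g+T_g f+R(f,g),\quad T_f g=\sum_{j} S_{j-3}f\cdot \Delta_j g,\quad R(f,g)=\sum_{|j-k|\le 2}\Delta_j f\cdot\Delta_k g,
\]
with the $\{\Delta_j\}$ a homogeneous Littlewood-Paley resolution of the identity and $S_j=\sum_{k\le j}\Delta_k$. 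I will show that $T_f g$ contributes the first term on the right-hand side of the lemma, $T_g f$ the second, and that $R(f,g)$ can be absorbed into either.

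For $T_f g$, each summand $S_{j-3}f\cdot\Delta_j g$ has Fourier support in an annulus $\{|\xi|\sim 2^j\}$, so only $O(1)$ values of $j$ contribute to $\Delta_{j'}(T_f g)$. Combining this with the pointwise maximal bound $\sup_j |S_{j-3}f|\lesssim Mf$ (consequence of the Schwartz decay of the Littlewood-Paley kernels), H\"older's inequality with the exponent pair $(r_1,p_1)$, and the Fefferman-Stein vector-valued maximal inequality on $L^r$, one obtains
\[
\|T_f g\|_{\dot{H}^s_r}\lesssim \Bigl\|Mf\cdot\Bigl(\sum_j 4^{js}|\Delta_j g|^2\Bigr)^{1/2}\Bigr\|_{L^r}\lesssim \|f\|_{L^{r_1}}\|g\|_{\dot{H}^s_{p_1}}.
\]
The strict inequalities $1<r<\infty$ and $1<p_1<\infty$ are exactly what is needed to apply Fefferman-Stein and the Hardy-Littlewood maximal theorem; the case $r_1=\infty$ is handled by the trivial replacement $|S_{j-3}f|\le\|f\|_{\infty}$. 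A symmetric argument, with the roles of $f$ and $g$ swapped and with the exponent pair $(r_2,p_2)$, yields $\|T_g f\|_{\dot{H}^s_r}\lesssim \|f\|_{\dot{H}^s_{r_2}}\|g\|_{L^{p_2}}$.

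The main obstacle is the remainder $R(f,g)=\sum_j \Delta_j f\cdot\tilde\Delta_j g$ with $\tilde\Delta_j=\sum_{|k-j|\le 2}\Delta_k$: each summand has Fourier support in a \emph{ball} $\{|\xi|\lesssim 2^{j+C}\}$ rather than an annulus, so the factor $2^{j's}$ cannot be read off directly from the spectrum of $\Delta_{j'}R$, and the strict positivity $s>0$ becomes essential. The plan is to apply $\Delta_{j'}$, note that only $j\ge j'-C$ contribute, reindex by $\ell=j-j'\ge -C$, and invoke Minkowski's inequality in $\ell$ together with the uniform (in $\ell$) vector-valued $L^r$ boundedness of $\{\Delta_{j-\ell}\}_{j}$, to obtain
\[
\|R(f,g)\|_{\dot{H}^s_r}\lesssim \sum_{\ell\ge -C}2^{-\ell s}\Bigl\|\Bigl(\sum_j 4^{js}|\Delta_j f|^2|\tilde\Delta_j g|^2\Bigr)^{1/2}\Bigr\|_{L^r},
\]
where the geometric series converges precisely because $s>0$. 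The inner square function is then bounded via $\sup_j|\Delta_j f|\lesssim Mf$, H\"older with exponents $(r_1,p_1)$, and the square-function characterization applied to $g$, producing $\|R(f,g)\|_{\dot{H}^s_r}\lesssim \|f\|_{L^{r_1}}\|g\|_{\dot{H}^s_{p_1}}$. Adding this to the two paraproduct bounds completes the proof.
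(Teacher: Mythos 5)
The paper does not prove this lemma at all: it is quoted directly from Christ--Weinstein \cite{CW91} as a black box, so there is no internal argument to compare yours against. Your sketch is the standard Littlewood--Paley/paraproduct proof of the fractional Leibniz rule, and it is correct. The division of labour is right: the two paraproducts $T_fg$ and $T_gf$ produce the two terms on the right-hand side, the hypotheses $1<r,p_1,r_2<\infty$ enter exactly where you say they do (the square-function characterization of $\dot H^s_q$, Fefferman--Stein, and the Hardy--Littlewood maximal theorem), the admissible endpoints $r_1=\infty$ and $p_2=\infty$ are correctly disposed of by the trivial uniform bound $\|S_jf\|_\infty\lesssim\|f\|_\infty$, and the only place $s>0$ is needed is the geometric series $\sum_{\ell\ge -C}2^{-\ell s}$ arising from the remainder $R(f,g)$, whose summands have ball rather than annular Fourier support; the case $s=0$ is indeed just H\"older. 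If you were to write this out in full, the only points deserving care are routine: justify the convergence of the homogeneous decomposition $\sum_j\Delta_jh=h$ modulo polynomials (work on a dense subclass or with the realization of $\dot H^s_q$), and record that in the remainder estimate the weight $4^{js}$ is attached to the $\tilde\Delta_jg$ factor before invoking $\sup_j|\Delta_jf|\lesssim Mf$, which is what your final H\"older step with the pair $(r_1,p_1)$ implicitly does. Neither is a gap.
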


We end this section with recalling the Strichartz estimates for the fourth-order Schr\"{o}dinger equation. See, for example, \cite{D18I} and the reference therein.
\begin{lemma}[Strichartz estimates]\label{lem 2.7.}
Let $\gamma\in \mathbb R$ and $u$ be the solution to the linear fourth-order Schr\"{o}dinger equation, namely
\begin{equation}\nonumber
u(t)=e^{it\Delta^{2}}u_{0}+i\int_{0}^{t}{e^{i(t-s)\Delta^{2}}F(s)ds,}
\end{equation}
for some data $u_{0}$ and $F$. Then for all $(p,q)$ and $(a,b)$ admissible,
$$
\left\|u\right\|_{L^{p}(I,\dot{H}_{q}^{\gamma})}\lesssim  \left\|u_{0}\right\|_{\dot{H}^{\gamma+\gamma_{p,q}}}+ \left\|F\right\|_{L^{a'}(I,\dot{H}_{b'}^{\gamma+\gamma_{p,q}-\gamma_{a',b'}-4})},
$$
where $\gamma_{p,q}$ and $\gamma_{a',b'}$ are as in \eqref{GrindEQ__1_9_}.
\end{lemma}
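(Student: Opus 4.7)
The overall plan is the standard contraction-mapping approach for semilinear evolution equations, applied to the Duhamel formulation
\[
\Phi(u)(t)=e^{it\Delta^{2}}u_{0}-i\lambda\int_{0}^{t}e^{i(t-\tau)\Delta^{2}}F(u)(\tau)\,d\tau, \qquad F(u)=|x|^{-b}|u|^{\sigma}u,
\]
on a complete metric space built from Strichartz norms. Concretely, for a well-chosen non-endpoint biharmonic admissible pair $(p,q)\in B_{0}$, and $T>0$, $M>0$ to be fixed, I consider
\[
X_{T,M}=\bigl\{u\in L^{\infty}([-T,T],H^{s})\cap L^{p}([-T,T],H_{q}^{s}):\ \|u\|_{L^{\infty}H^{s}}+\|u\|_{L^{p}H_{q}^{s}}\le M\bigr\},
\]
equipped with the weaker distance $d(u,v)=\|u-v\|_{L^{p}([-T,T],L^{q})}$ (this standard downgrade avoids having to control fractional derivatives of the difference of nonlinearities). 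By Lemma \ref{lem 2.7.} with $\gamma=s$ and a second biharmonic admissible pair $(a,b)$, contraction reduces to two nonlinear estimates of the schematic form
\begin{align*}
\|F(u)\|_{L^{a'}([-T,T],H_{b'}^{s})} &\le C\,T^{\theta}\,\|u\|_{X_{T,M}}^{\sigma+1},\\
\|F(u)-F(v)\|_{L^{a'}([-T,T],L^{b'})} &\le C\,T^{\theta}\bigl(\|u\|_{X_{T,M}}^{\sigma}+\|v\|_{X_{T,M}}^{\sigma}\bigr)\,d(u,v),
\end{align*}
for some $\theta>0$; the subcriticality hypothesis $\sigma<\sigma_{c}(s)$ is exactly what produces this positive power of $T$ after optimising the exponents.

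The heart of the argument is the nonlinear estimate. First I decompose $\mathbb R^{d}=B\cup B^{c}$ with $B=\{|x|\le 1\}$ and write $F(u)=\chi_{B}|x|^{-b}|u|^{\sigma}u+\chi_{B^{c}}|x|^{-b}|u|^{\sigma}u$. On $B^{c}$ the weight is bounded, so Lemma \ref{lem 2.6.} (fractional product rule) together with Corollary \ref{cor 2.5.} (which requires exactly the regularity assumption $\sigma\ge\sigma_{\star}(s)$ when $\sigma$ is not an even integer) gives a bound in terms of $\|u\|_{L^{\infty}L^{q_{1}}}^{\sigma}\|u\|_{L^{p}H_{q_{2}}^{s}}$, which is controlled via Sobolev embedding (Corollary \ref{cor 2.3.}) by $\|u\|_{H^{s}}^{\sigma}\|u\|_{H_{q}^{s}}$. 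On $B$ I use Lemma \ref{lem 2.6.} with $\chi_{B}|x|^{-b}$ as one factor; the $L^{r_{1}}(B)$-norm of $\chi_{B}|x|^{-b}$ is finite iff $b<d/r_{1}$, and its $\dot H_{r_{2}}^{s}$-norm (for $s>0$) requires additionally $s<d/r_{2}$, i.e.\ we pick up both $b<d$ and a bound on $s$. Matching Hölder indices to the admissible pair then forces the constraints $b<\min\{4,d,\tfrac{3}{2}d-s,\tfrac{d}{2}+2-s\}$ that appear in the theorem. The difference estimate is handled analogously, using the pointwise bound $\bigl||u|^{\sigma}u-|v|^{\sigma}v\bigr|\lesssim(|u|^{\sigma}+|v|^{\sigma})|u-v|$ combined with Hölder (no derivatives on the difference thanks to the weaker distance $d$).

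With the two nonlinear estimates in hand, a standard choice $M=2C\|u_{0}\|_{H^{s}}$ and $T$ small enough makes $\Phi$ a contraction on $(X_{T,M},d)$; its fixed point is the desired solution, and the blow-up alternative produces $T_{\max},T_{\min}$ as maximal times. For continuous dependence, because the nonlinear map is not Lipschitz at the top regularity $H^{s}$ when $\sigma$ is small, I would run the Strichartz estimate for $u_{n}-u$ in $L^{p}([-T,T],H_{q}^{s-\varepsilon})$ for arbitrary $\varepsilon>0$; the uniform $H^{s}$ bound on $\{u_{n}\}$ (from the fixed-point construction applied to each $u_{0,n}$, valid for large $n$ since $\|u_{0,n}\|_{H^{s}}\to\|u_{0}\|_{H^{s}}$) interpolated with the convergence of initial data in a lower norm gives $u_{n}\to u$ in $L^{p}H_{q}^{s-\varepsilon}$, whence the $C([-T,T],H^{s-\varepsilon})$ conclusion follows from Strichartz applied at regularity $s-\varepsilon$.

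The main obstacle I expect is the detailed bookkeeping of Lebesgue/Sobolev exponents in the ``inside $B$'' estimate: one must simultaneously satisfy (i) biharmonic admissibility of $(p,q)$ and $(a,b)$, (ii) the scaling identity $\tfrac1{b'}=\tfrac1{r_{1}}+\tfrac\sigma{q_{1}}+\tfrac1{q_{2}}$ coming from Lemmas \ref{lem 2.4.}--\ref{lem 2.6.}, (iii) $b<d/r_{1}$ and the $\dot H^{s}$-integrability threshold $s<d/r_{2}$ for the weight, and (iv) the Sobolev embeddings needed to bound $L^{q_{1}},L^{q_{2}}$ norms of $u$ by $H^{s}$ and $H_{q}^{s}$ norms. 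The four constraints on $b$ in the statement are precisely what makes these compatible, and the regularity threshold $\sigma_{\star}(s)$ is exactly what Corollary \ref{cor 2.5.} imposes in the subcases where the nonlinearity is not smooth enough; splitting the argument along the cases appearing in \eqref{GrindEQ__1_10_} will be the technically heaviest part.
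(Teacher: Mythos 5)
Your proposal does not address the statement in question. The statement to be proved is Lemma \ref{lem 2.7.}, the Strichartz estimates for the linear fourth-order Schr\"odinger propagator $e^{it\Delta^{2}}$. What you have written is instead an outline of the proof of Theorem \ref{thm 1.1.} (local well-posedness via contraction mapping), and your very first reduction step explicitly \emph{invokes} Lemma \ref{lem 2.7.} as a tool. You cannot establish the Strichartz estimates by an argument that takes the Strichartz estimates as an input; as a proof of Lemma \ref{lem 2.7.} the proposal is circular, and in substance it is empty --- none of the content of the lemma (the linear space-time bounds, the role of the derivative corrections $\gamma_{p,q}$ and $\gamma_{a',b'}$, the gain of $4$ derivatives in the inhomogeneous term) is ever derived.

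For the record, the paper does not prove Lemma \ref{lem 2.7.} either; it cites \cite{D18I}. A genuine proof would proceed along the standard lines for fourth-order dispersive equations: one first establishes the fixed-time dispersive estimates for $e^{it\Delta^{2}}$ by stationary-phase analysis of its kernel, then obtains the estimates for biharmonic Schr\"odinger admissible pairs (those with $\frac{4}{p}+\frac{d}{q}=\frac{d}{2}$) by the $TT^{*}$ argument together with the Hardy--Littlewood--Sobolev inequality, handles the retarded term for non-diagonal pairs via the Christ--Kiselev lemma, and finally passes to general admissible pairs satisfying only the inequality $\frac{2}{p}+\frac{d}{q}\le\frac{d}{2}$ by Sobolev embedding, which is precisely where the correction exponents $\gamma_{p,q}$ and $\gamma_{a',b'}-4$ of \eqref{GrindEQ__1_9_} enter. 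None of these steps appears in your write-up. Separately, even read as a sketch of Theorem \ref{thm 1.1.}, your outline compresses the entire content of Lemmas \ref{lem 3.2.} and \ref{lem 3.3.} (the exponent bookkeeping you yourself identify as the heart of the matter) into a single paragraph, but that is beside the point here: the task was the linear estimate, and it has not been addressed.
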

\section{Proof of main result}\label{sec 3.}

In this section, we prove Theorem \ref{thm 1.1.}. Before establishing the various nonlinear estimates, we recall the following useful fact. See Remark 3.4 of \cite{AK212}.
\begin{remark}\label{rem 3.1.}
\textnormal{Let $1<r<\infty$, $s\ge0$ and $b>0$. Let $\chi\in C_{0}^{\infty}(\mathbb R^{d})$ satisfy $\chi(x)=1$ for $|x|\le 1$ and $\chi(x)=0$ for $|x|\ge 2$.
If $b+s<\frac{d}{r}$, then $\chi(x)|x|^{-b} \in \dot{H}_{r}^{s}$.
If $b+s>\frac{d}{r}$, then $(1-\chi(x))|x|^{-b}\in \dot{H}_{r}^{s}$.}
\end{remark}
\begin{lemma}\label{lem 3.2.}
$d\in \mathbb N$, $0\le s <\min \{2+\frac{d}{2},\frac{3}{2}d\}$, $0<b<\min\{4,d,\frac{3}{2}d-s,\frac{d}{2}+2-s\}$  and $0<\sigma<\sigma_{c}(s)$. If $\sigma$ is not an even integer, assume \eqref{GrindEQ__1_10_}. Then there exist $(a_{i},b_{i})\in A$ $(i=1,2)$, $(p_{j},q_{j})\in B_{0}$ $(j=1,2,3,4)$ and $\theta>0$ such that
\begin{equation} \label{GrindEQ__3_1_}
\left\|\chi(x)|x|^{-b}|u|^{\sigma}u\right\|_{L^{a_{1}'}(I,\dot{H}_{b_{1}'}^{s-\gamma_{a_{1}',b_{1}'}-4})}
\lesssim |I|^{\theta}\left\|u\right\|_{L^{p_{1}}(I,H_{q_{1}}^{s})}^{\sigma+1},
\end{equation}
\begin{equation} \label{GrindEQ__3_2_}
\left\|(1-\chi(x))|x|^{-b}|u|^{\sigma}u\right\|_{L^{a_{2}'}(I,\dot{H}_{b_{2}'}^{s-\gamma_{a_{2}',b_{2}'}-4})}
\lesssim |I|^{\theta}\max_{j\in\{2,3,4\}}\left\|u\right\|_{L^{p_{j}}(I,H_{q_{j}}^{s})}^{\sigma+1},
\end{equation}
where $I(\subset \mathbb R)$ is an interval and $\chi\in C_{0}^{\infty}(\mathbb R^{d})$ is given in Remark \ref{rem 3.1.}.
\end{lemma}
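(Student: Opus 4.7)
The plan is to split the nonlinearity into a near-origin piece and a far-from-origin piece,
\[
|x|^{-b}|u|^{\sigma}u \;=\; \underbrace{\chi(x)|x|^{-b}|u|^{\sigma}u}_{F_1} \;+\; \underbrace{(1-\chi(x))|x|^{-b}|u|^{\sigma}u}_{F_2},
\]
and to prove the two Strichartz-type estimates \eqref{GrindEQ__3_1_} and \eqref{GrindEQ__3_2_} separately. Both will rest on the same four tools: the fractional product rule (Lemma \ref{lem 2.6.}) to distribute the $s-\gamma_{a',b'}-4$ derivatives between the singular weight and the power nonlinearity; the fractional chain rule (Corollary \ref{cor 2.5.}) to reduce Sobolev norms of $|u|^{\sigma}u$ to products of Lebesgue and Sobolev norms of $u$; Sobolev embedding (Corollary \ref{cor 2.3.}) to absorb every auxiliary Lebesgue norm of $u$ into $\|u\|_{H^{s}_{q_j}}$ for an appropriate biharmonic admissible pair $(p_j,q_j)\in B_0$; and Hölder in time to extract the factor $|I|^{\theta}$, whose positivity will correspond precisely to the subcritical condition $\sigma<\sigma_c(s)$ (i.e.\ $s>s_c$).

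For the near-origin piece $F_1$, I would pick an admissible pair $(a_1,b_1)\in A$ and a biharmonic admissible pair $(p_1,q_1)\in B_0$ adapted to $(s,\sigma,b,d)$, and then apply Lemma \ref{lem 2.6.} in $x$ with exponents $r_1,r_2$ chosen so that Remark \ref{rem 3.1.} yields finiteness of $\|\chi(x)|x|^{-b}\|_{L^{r_1}}$ and $\|\chi(x)|x|^{-b}\|_{\dot H^{\,s-\gamma_{a_1',b_1'}-4}_{r_2}}$; matching scaling dimensions this translates the near-origin constraint $b+(\text{derivatives})<d/r$ of Remark \ref{rem 3.1.} into the hypothesis $b<\tfrac{d}{2}+2-s$. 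The factor $|u|^{\sigma}u$ is then dispatched through Corollary \ref{cor 2.5.} (whose regularity requirement is covered by \eqref{GrindEQ__1_10_} when $\sigma$ is not an even integer) and Corollary \ref{cor 2.3.}, leaving $\|u\|_{H^{s}_{q_1}}^{\sigma+1}$. A last Hölder step in time converts $L^{p_1/(\sigma+1)}(I)\to |I|^{\theta}L^{p_1}(I)$, with $\theta>0$ by the subcriticality gap.

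For the far-from-origin piece $F_2$, the same skeleton applies, but the relevant case of Remark \ref{rem 3.1.} now forces the complementary inequality $b+(\cdot)>d/r$, which at the scaling level is what produces the constraints $b<d$ and $b<\tfrac{3d}{2}-s$. To cover the entire parameter region $0\le s<\min\{2+\tfrac{d}{2},\tfrac{3}{2}d\}$ one cannot use a single distribution of the derivatives: roughly speaking, putting essentially all $s$ derivatives on $|u|^{\sigma}u$ works well in the low-regularity regime, while putting a positive share of them on $(1-\chi)|x|^{-b}$ is needed to handle the intermediate and $s>2$ regimes. Each of these inequivalent distributions generates its own biharmonic admissible pair $(p_j,q_j)$, which is precisely why the right-hand side of \eqref{GrindEQ__3_2_} is a maximum over three pairs $j\in\{2,3,4\}$; and in every subcase the lower bound \eqref{GrindEQ__1_10_} on $\sigma$ is exactly what is needed to validate Corollary \ref{cor 2.5.} at the relevant regularity level.

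The principal obstacle is not any single estimate but the bookkeeping of exponents. One must exhibit $(a_i,b_i),(p_j,q_j),r_1,r_2,\ldots$ that \emph{simultaneously} satisfy (i) admissibility of $(a_i,b_i)\in A$ and biharmonic admissibility of $(p_j,q_j)\in B_0$, (ii) the Hölder algebra in time yielding $\theta>0$, (iii) the Sobolev-embedding chain that replaces every auxiliary $L^{\alpha}$-norm of $u$ by $\|u\|_{H^{s}_{q_j}}$, (iv) the thresholds of Remark \ref{rem 3.1.} applied to $\chi|x|^{-b}$ or $(1-\chi)|x|^{-b}$ at the correct fractional regularity, and (v) the smoothness requirement of Corollary \ref{cor 2.5.}. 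The upper bounds $b<\min\{4,d,\tfrac{3d}{2}-s,\tfrac{d}{2}+2-s\}$ and the case-by-case lower bound \eqref{GrindEQ__1_10_} on $\sigma$ are precisely the compatibility conditions under which such a choice of exponents exists in every subcase; once a valid choice is fixed, the desired inequalities reduce to a direct string of Hölder, product-rule, chain-rule and embedding estimates.
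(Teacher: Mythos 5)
Your outline matches the paper's strategy exactly---cutoff decomposition, Lemma \ref{lem 2.6.} to split the derivative between the weight and the nonlinearity, Corollary \ref{cor 2.5.} on $|u|^{\sigma}u$, Remark \ref{rem 3.1.} for the weight, Corollary \ref{cor 2.3.} to return to $H^{s}_{q_j}$, and H\"older in time for $|I|^{\theta}$. But what you have written is a plan, not a proof: the entire content of Lemma \ref{lem 3.2.} is the existence of exponents satisfying all of your constraints (i)--(v) simultaneously, and you defer precisely that step with the assertion that the hypotheses on $b$ and $\sigma$ ``are precisely the compatibility conditions under which such a choice of exponents exists in every subcase.'' That assertion is the lemma. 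The paper's proof consists almost entirely of setting up the constraint systems (e.g.\ \eqref{GrindEQ__3_4_}, \eqref{GrindEQ__3_15_}, \eqref{GrindEQ__3_32_}, \eqref{GrindEQ__3_38_}), reducing them to explicit inequalities on $\frac{4}{a_i}$ and $\frac{d}{b_i}$, and exhibiting concrete solutions in each regime ($s<\frac{d}{2}$ versus $s>\frac{d}{2}$; $s\le 1$, $1<s\le 2$, $s>2$; $d\ge 3$ versus $d\le 2$; $\sigma$ above or below $\frac{2-2b}{d}$). Without this, one cannot see why the specific four-branch lower bound \eqref{GrindEQ__1_10_} is the right one: each branch is read off from a \emph{particular} choice, e.g.\ $(a_2,b_2)=(2,\frac{2d}{d-2})$ gives the requirement $\sigma\ge\lceil s\rceil-2$ for $d\ge3$, $s>2$, while $\frac{d}{b_2}=(b+\frac{(d-2s)\sigma}{2})^{+}$ gives $\sigma\ge[s-b-\frac{d\sigma}{2}]$, which after analysis becomes the $(\frac{2s-2b-2}{d})^{+}$ branch. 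None of this derivation is present or even sketched in your proposal.

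One secondary point is actually wrong rather than merely missing. You explain the maximum over $j\in\{2,3,4\}$ in \eqref{GrindEQ__3_2_} as arising from ``inequivalent distributions of the derivatives,'' each used in a different regularity regime. In the paper the three pairs appear simultaneously in a single estimate: the fractional product rule produces two terms, $\|(1-\chi)|x|^{-b}\|_{L^{\gamma_2}}\||u|^{\sigma}u\|_{\dot H^{s-\gamma_{a_2',b_2'}-4}_{r_2}}$ and $\|(1-\chi)|x|^{-b}\|_{\dot H^{s-\gamma_{a_2',b_2'}-4}_{\gamma_4}}\||u|^{\sigma}u\|_{L^{r_4}}$; the first is controlled by $\|u\|^{\sigma}_{H^{s}_{q_2}}\|u\|_{H^{s}_{q_3}}$ (two distinct pairs because $\|u\|_{L^{\alpha_2}}$ and $\|u\|_{\dot H^{\cdot}_{\beta_2}}$ need different embeddings) and the second by $\|u\|^{\sigma+1}_{H^{s}_{q_4}}$. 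So the count of three is an artifact of one product-rule application, not of a case split over regimes. To turn your proposal into a proof you would need to write down the constraint systems explicitly and verify their solvability under the stated hypotheses, which is where all the work lies.
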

\begin{proof} If $(a_{i},b_{i})\in A$ $(i=1,2)$, then we can see that
\begin{equation} \label{GrindEQ__3_3_}
\left\{\begin{array}{l}
{s-\gamma_{a_{i}',b_{i}'}-4\ge 0 \Leftrightarrow \frac{4}{a_{i}}\le s+\frac{d}{2}-\frac{d}{b_{i}},}\\
{\gamma_{a_{i}',b_{i}'}+4\ge 0 \Leftrightarrow \frac{4}{a_{i}}\ge \frac{d}{2}-\frac{d}{b_{i}}.}
\end{array}\right.
\end{equation}
We divide the study in two cases: $0\le s<\frac{d}{2}$ and $s>\frac{d}{2}$.

{\bf Case 1.} We consider the case $0\le s<\frac{d}{2}$.

First, we prove \eqref{GrindEQ__3_1_}.
Let $(a_{1},b_{1})\in A$ and $(p_{1},q_{1})\in B_{0}$ satisfy the following system:
\begin{equation} \label{GrindEQ__3_4_}
\left\{\begin{array}{l}
{\frac{1}{b_{1}'}-\sigma\left(\frac{1}{q_{1}}-\frac{s}{d}\right)-\left(\frac{1}{q_{1}}-\frac{\gamma_{a_{1}',b_{1}'}+4}{d}\right)>\frac{b}{d},}\\
{\frac{1}{a_{1}'}-\frac{\sigma +1}{p_{1}}>0,}\\
{\frac{1}{q_{1}}-\frac{s}{d}>0,}\\
{s-\gamma_{a_{1}',b_{1}'}-4\ge0,}\\
{\gamma_{a_{1}',b_{1}'}+4\ge 0.}\\
\end{array}\right.
\end{equation}
Putting
\begin{equation} \label{GrindEQ__3_5_}
\left\{\begin{array}{l}
{\frac{1}{\alpha_{1}}:=\frac{1}{q_{1}}-\frac{s}{d},~\frac{1}{\beta_{1}}:=\frac{1}{q_{1}}-\frac{\gamma_{a_{1}',b_{1}'}+4}{d},}\\
{\frac{1}{r_{1}}:=\frac{\sigma}{\alpha_{1}}+\frac{1}{\beta_{1}},~\frac{1}{\gamma _{1}}:=\frac{1}{b_{1}'}-\frac{1}{r_{1}},}\\
{\frac{1}{\bar{r}_{1}}:=\frac{\sigma}{\alpha_{1}},~~\frac{1}{\bar{\gamma}_{1}}:=\frac{1}{b_{1}'}-\frac{1}{\bar{r}_{1}},} \\
\end{array}\right.
\end{equation}
it follows from Lemma \ref{lem 2.1.}, the third and last equations in \eqref{GrindEQ__3_4_} that
$$\dot{H}_{q_{1} }^{s}\hookrightarrow L^{\alpha_{1}}, ~\dot{H}_{q_{1} }^{s}\hookrightarrow\dot{H}_{\beta_{1} }^{s-\gamma_{a_{1}',b_{1}'}-4}.$$
Furthermore, it follows from Remark \ref{rem 3.1.} and the first equation in \eqref{GrindEQ__3_4_} that
$$
\left\| \chi(x)|x|^{-b} \right\| _{L^{\gamma _{1}}}<\infty, ~\left\| \chi(x)|x|^{-b} \right\| _{\dot{H}_{\bar{\gamma}_{1} }^{s-\gamma_{a_{1}',b_{1}'}-4} }<\infty.
$$
Hence, using the fourth equation in \eqref{GrindEQ__3_4_}, \eqref{GrindEQ__3_5_}, Lemma \ref{lem 2.6.} and Corollary \ref{cor 2.5.}, we have
\begin{eqnarray}\begin{split} \label{GrindEQ__3_6_}
&\left\| \chi(x)|x|^{-b} |u|^{\sigma}u\right\| _{\dot{H}_{b_{1}'}^{s-\gamma_{a_{1}',b_{1}'}-4}}\\
&~~~~~~~\lesssim \left\| \chi(x)|x|^{-b} \right\| _{L^{\gamma _{1}}} \left\| |u|^{\sigma}u\right\| _{\dot{H}_{r_{1} }^{s-\gamma_{a_{1}',b_{1}'}-4} }
          +\left\| \chi(x)|x|^{-b} \right\| _{\dot{H}_{\bar{\gamma}_{1} }^{s-\gamma_{a_{1}',b_{1}'}-4} } \left\| |u|^{\sigma}u\right\| _{L^{\bar{r}_{1} } }\\
&~~~~~~~\lesssim \left\| u\right\|^{\sigma} _{L^{\alpha_{1}}}\left\|u\right\| _{\dot{H}_{\beta_{1} }^{s-\gamma_{a_{1}',b_{1}'}-4} }
          +\left\| u\right\|^{\sigma+1} _{L^{\alpha_{1}}}
        \lesssim \left\|u\right\|^{\sigma+1}_{\dot{H}_{q_{1} }^{s}},
\end{split}\end{eqnarray}
where we need the assumption that if $\sigma$ is not an even integer,
\begin{equation} \label{GrindEQ__3_7_}
\sigma\ge \lceil{ s-\gamma_{a_{1}',b_{1}'}-4}\rceil-1.
\end{equation}
Using the H\"{o}lder inequality, \eqref{GrindEQ__3_6_} and the second equation in the system \eqref{GrindEQ__3_4_}, we immediately have \eqref{GrindEQ__3_1_}.
Therefore, it remains to choose $(a_{1},b_{1})\in A$ and $(p_{1},q_{1})\in B_{0}$ satisfying \eqref{GrindEQ__3_4_}.
In fact, we can easily see that there exists $(p_{1},q_{1})\in B_{0}$ satisfying \eqref{GrindEQ__3_4_}, if
\begin{equation} \label{GrindEQ__3_8_}
\max\left\{\frac{d-4}{2d},\frac{s}{d},\frac{1}{2}-\frac{4}{da_{1}'(\sigma+1)}\right\}<\frac{1}{q_{1}}
<\min\left\{\frac{1}{2},\frac{s}{d}+\frac{1}{\sigma+1}\left(\frac{1}{2}-\frac{b+s}{d}+\frac{4}{a_{1}d}\right)\right\}.
\end{equation}
The equation \eqref{GrindEQ__3_8_} holds, if $s<\frac{d}{2}$, $\sigma<\sigma_{c}(s)$ and
\begin{equation} \label{GrindEQ__3_9_}
\frac{4}{a_{1}}>\max\left\{b+s-\frac{d}{2},b-2+\frac{(d-2s-4)\sigma}{2}\right\}.
\end{equation}
In view of \eqref{GrindEQ__1_8_}, \eqref{GrindEQ__3_3_} and \eqref{GrindEQ__3_9_}, it suffices to choose $a_{1}\in [2,\infty]$ and $b_{1}\in [2,\infty)$ satisfying
\begin{equation} \label{GrindEQ__3_10_}
\max\left\{b+s-\frac{d}{2},\frac{d}{2}-\frac{d}{b_{1}},b-2+\frac{(d-2s-4)\sigma}{2}\right\}<\frac{4}{a_{1}}\le \min\left\{d-\frac{2d}{b_{1}},s+\frac{d}{2}-\frac{d}{b_{1}}\right\}.
\end{equation}

If $1<s<\frac{d}{2}$ with $d\ge 3$, then we put $\frac{4}{a_{1}}:=2$ and $\frac{d}{b_{1}}:=\frac{d}{2}-1$. Using the fact $b<2+\frac{d}{2}-s$, we can see that $(a_{1}, b_{1})\in A$ satisfies \eqref{GrindEQ__3_10_}.
And the assumption \eqref{GrindEQ__3_7_} becomes
\begin{equation} \label{GrindEQ__3_11_}
\sigma\ge \lceil{ s-\gamma_{a_{1}',b_{1}'}-4}\rceil-1=\lceil{s\rceil}-2.
\end{equation}

If $0\le s\le 1$ with $s<\frac{d}{2}$, then we have
\begin{equation} \label{GrindEQ__3_12_}
s+\frac{d}{2}-\frac{d}{b_{1}}\le \min\left\{2,d-\frac{2d}{b_{1}}\right\}\Leftrightarrow \frac{d}{b_{1}}\in \left[s-2+\frac{d}{2},\frac{d-2s}{2}\right].
\end{equation}
Hence, putting $\frac{4}{a_{1}}:=s+\frac{d}{2}-\frac{d}{b_{1}}$, it suffices to choose $b_{1}$ satisfying
\begin{equation} \label{GrindEQ__3_13_}
\left\{\begin{array}{l}
{s-2+\frac{d}{2}\le \frac{d}{b_{1}}\le \frac{d-2s}{2},}\\
{0<\frac{d}{b_{1}}<\min\left\{d-b,\frac{d}{2},\frac{2s+d-2b+4}{2}-\frac{(d-2s-4)\sigma}{2}\right\}.}\\
\end{array}\right.
\end{equation}
Using the fact $b<\min\left\{d, \frac{d}{2}+2-s\right\}$ and $\sigma<\sigma_{c}(s)$, we can easily choose $b_{1}$ satisfying \eqref{GrindEQ__3_13_}. And the assumption \eqref{GrindEQ__3_7_} becomes
\begin{equation} \label{GrindEQ__3_14_}
\sigma\ge \left\lceil s-\gamma_{a_{1}',b_{1}'}-4\right\rceil-1=-1,
\end{equation}
which is trivial. This completes the proof of \eqref{GrindEQ__3_1_}.

Next, we prove \eqref{GrindEQ__3_2_}.
Let $(a_{2},b_{2})\in A$ and $(p_{i},q_{i})\in B_{0}$ $(i=2,3,4)$ satisfy the following system:
\begin{equation} \label{GrindEQ__3_15_}
\left\{\begin{array}{ll}
{\frac{1}{q_{2}}-\frac{s}{d}\le\frac{1}{\alpha_{2}}\le\frac{1}{q_{2}},\frac{1}{q_{3}}-\frac{\gamma_{a_{2}',b_{2}'}+4}{d}\le\frac{1}{\beta_{2}}\le\frac{1}{q_{3}}},~&~~~~~~~~{(1)}\\
{0<\frac{1}{b_{2}'}-\frac{\sigma}{\alpha_{2}}-\frac{1}{\beta_{2}}<\frac{b}{d},}~&~~~~~~~~{(2)}\\
{\frac{1}{a_{2}'}-\frac{\sigma}{p_{2}}-\frac{1}{p_{3}}>0,}~&~~~~~~~~{(3)}\\
{\frac{1}{q_{2}}>\frac{s}{d},\frac{1}{q_{3}}>\frac{\gamma_{a_{2}',b_{2}'}+4}{d},}~&~~~~~~~~{(4)}\\
{\frac{1}{q_{4}}-\frac{s}{d}\le\frac{1}{\alpha_{4}}\le\frac{1}{q_{4}},}~&~~~~~~~~{(5)}\\
{0<\frac{1}{b_{2}'}-\frac{\sigma+1}{\alpha_{4}}<\frac{b+s-\gamma_{a_{2}',b_{2}'}-4}{d},}~&~~~~~~~~{(6)}\\
{\frac{1}{a_{2}'}-\frac{\sigma+1}{p_{4}}>0,\frac{1}{q_{4}}>\frac{s}{d},}~&~~~~~~~~{(7)}\\
{s-\gamma_{a_{2}',b_{2}'}-4\ge0,\gamma_{a_{2}',b_{2}'}+4\ge 0.}~&~~~~~~~~{(8)}\\
\end{array}\right.
\end{equation}
Using Lemma \ref{lem 2.2.}, Corollary \ref{cor 2.3.} and the equations (1), (4), (5), (7), (8) in the system \eqref{GrindEQ__3_15_}, we have the embeddings:
$$
H_{q_{2}}^{s}\hookrightarrow L^{\alpha_{2}},~H_{q_{3}}^{s}\hookrightarrow \dot{H}_{\beta_{2}}^{s-\gamma_{a_{2}',b_{2}'}-4},
~H_{q_{4} }^{s}\hookrightarrow L^{\alpha_{4}}.
$$
Putting
\begin{equation} \label{GrindEQ__3_16_}
\left\{\begin{array}{l}
{\frac{1}{r_{2}}:=\frac{\sigma}{\alpha_{2}}+\frac{1}{\beta_{2}},~\frac{1}{\gamma _{2}}:=\frac{1}{b_{2}'}-\frac{1}{r_{2}},}\\
{\frac{1}{r_{4}}:=\frac{\sigma+1}{\alpha_{4}},~\frac{1}{\gamma _{4}}:=\frac{1}{b_{2}'}-\frac{1}{r_{4}},}\\
\end{array}\right.
\end{equation}
it follows from Remark \ref{rem 3.1.} and the equations (2), (6) in \eqref{GrindEQ__3_15_} that
$$
\left\|(1-\chi(x))|x|^{-b} \right\| _{L^{\gamma _{2}}}<\infty,
~\left\| (1-\chi(x))|x|^{-b}\right\| _{\dot{H}_{\gamma_{4}}^{s-\gamma_{a_{2}',b_{2}'}-4}}<\infty.
$$
Hence, using \eqref{GrindEQ__3_16_}, Corollary \ref{cor 2.5.} and the equation (8) in the system \eqref{GrindEQ__3_15_}, we have
\begin{eqnarray}\begin{split} \label{GrindEQ__3_17_}
&\left\| (1-\chi(x))|x|^{-b} |u|^{\sigma}u\right\| _{\dot{H}_{b_{2}'}^{s-\gamma_{a_{2}',b_{2}'}-4}}\\
&~~~~~\lesssim \left\|(1-\chi(x))|x|^{-b} \right\| _{L^{\gamma _{2}}} \left\| |u|^{\sigma}u\right\|_{\dot{H}_{r_{2}}^{s-\gamma_{a_{2}',b_{2}'}-4} }
          +\left\| (1-\chi(x))|x|^{-b}\right\| _{\dot{H}_{\gamma_{4}}^{s-\gamma_{a_{2}',b_{2}'}-4}} \left\||u|^{\sigma}u\right\|_{L^{r_{4}}}\\
&~~~~~\lesssim \left\|u\right\|^{\sigma} _{L^{\alpha_{2}}}\left\|u\right\| _{\dot{H}_{\beta_{2}}^{s-\gamma_{a_{2}',b_{2}'}-4} }
          +\left\|u\right\|^{\sigma+1} _{L^{\alpha_{4}}}
        \lesssim \left\|u\right\|^{\sigma}_{H_{q_{2}}^{s}}\left\|u\right\|_{H_{q_{3}}^{s}}+\left\|u\right\|^{\sigma+1}_{H_{q_{4} }^{s}},
\end{split}\end{eqnarray}
where we need the assumption that if $\sigma$ is not an even integer,
\begin{equation} \label{GrindEQ__3_18_}
\sigma\ge \left\lceil{ s-\gamma_{a_{2}',b_{2}'}-4}\right\rceil-1.
\end{equation}
Then, \eqref{GrindEQ__3_2_} follows directly from the H\"{o}lder inequality, \eqref{GrindEQ__3_16_} and the equations (3), (7) in the system \eqref{GrindEQ__3_15_}.
Therefore, it remains to choose $(a_{2},b_{2})\in A$ and $(p_{i},q_{i})\in B_{0}$ $(i=2,3,4)$ satisfying \eqref{GrindEQ__3_15_}.
In fact, it is easy to see that we can choose $\alpha_{2}$ and $\beta_{2}$ satisfying the equations (1) and (2) in the system \eqref{GrindEQ__3_15_}, if
\begin{equation} \label{GrindEQ__3_19_}
1-\frac{1}{b_{2}}-\frac{b}{d}-\frac{\sigma}{q_{2}}<\frac{1}{q_{3}}<\frac{1}{2}+\frac{\sigma s}{d}+\frac{4}{a_{2}d}-\frac{\sigma}{q_{2}}.
\end{equation}
In view of (3) and (4) in the system \eqref{GrindEQ__3_15_}, we can choose ($p_{3}, q_{3})\in B_{0}$ satisfying \eqref{GrindEQ__3_15_}, if
\begin{equation} \label{GrindEQ__3_20_}
\left\{\begin{array}{l}
{\max\left\{1-\frac{1}{b_{2}}-\frac{b}{d}-\frac{\sigma}{q_{2}},\frac{\sigma+1}{2}-\frac{4}{d}+\frac{4}{a_{2}d}-\frac{\sigma}{q_{2}},\frac{d-4}{2d},
\;-\frac{1}{2}+\frac{4}{a_{2}d}+\frac{1}{b_{2}}\right\}<\frac{1}{q_{3}}},\\
{\frac{1}{q_{3}}<\min\left\{\frac{1}{2},\frac{1}{2}+\frac{\sigma s}{d}+\frac{4}{a_{2}d}-\frac{\sigma}{q_{2}}\right\}.}
\end{array}\right.
\end{equation}
Using the fact $\sigma<\sigma_{c}(s)$, we can easily see that \eqref{GrindEQ__3_20_} holds if
\begin{equation} \label{GrindEQ__3_21_}
\frac{d}{2}-\frac{d}{b_{2}}-b-\sigma s<\frac{4}{a_{2}}<d-\frac{d}{b_{2}},
\end{equation}
\begin{equation} \label{GrindEQ__3_22_}
\max\left\{\frac{1}{2}-\frac{1}{b_{2}}-\frac{b}{d},\frac{\sigma}{2}-\frac{4}{d}+\frac{4}{a_{2}d}\right\}<\frac{\sigma}{q_{2}}
<\min\left\{\frac{\sigma s}{d}+\frac{4}{a_{2}d}+\frac{2}{d},1+\frac{\sigma s}{d}-\frac{1}{b_{2}}\right\}.
\end{equation}
In view of \eqref{GrindEQ__3_22_} and the equation (4) in \eqref{GrindEQ__3_15_}, there exists $(p_{2},q_{2})\in B_{0}$ satisfying \eqref{GrindEQ__3_15_}, if
\begin{equation} \label{GrindEQ__3_23_}
\left\{\begin{array}{l}
{\max\left\{\frac{\sigma s}{d},\frac{(d-4)\sigma}{2d},\frac{1}{2}-\frac{1}{b_{2}}-\frac{b}{d},\frac{\sigma}{2}-\frac{4}{d}+\frac{4}{a_{2}d}\right\}<\frac{\sigma}{q_{2}}},\\
{\frac{\sigma}{q_{2}}<\min\left\{\frac{\sigma}{2},\frac{\sigma s}{d}+\frac{4}{a_{2}d}+\frac{2}{d},1+\frac{\sigma s}{d}-\frac{1}{b_{2}}\right\}.}
\end{array}\right.
\end{equation}
One can easily see that \eqref{GrindEQ__3_23_} holds, if
\begin{equation} \label{GrindEQ__3_24_}
\left\{\begin{array}{l}
{\frac{d}{2}-b-\frac{d\sigma}{2}<\frac{d}{b_{2}}<d-\frac{(d-2s-4)\sigma}{2},}\\
{\max\left\{\frac{d}{2}-\frac{d}{b_{2}}-b-\sigma s-2,\frac{(d-2s-4)\sigma}{2}-2 \right\}<\frac{4}{a_{2}}<d-\frac{(d-2s)\sigma}{2}+4-\frac{d}{b_{2}}.}\\
\end{array}\right.
\end{equation}
On the other hand, an easy computation shows that there exist $\alpha_{4}$ satisfying the equations (5) and (6) in the system \eqref{GrindEQ__3_15_}, if
\begin{equation} \label{GrindEQ__3_25_}
\frac{1}{2}-\frac{b+s}{d}+\frac{4}{a_{2}d}<\frac{\sigma+1}{q_{4}}<1-\frac{1}{b_{2}}+\frac{(\sigma+1)s}{d}.
\end{equation}
And the equation (7) in the system \eqref{GrindEQ__3_15_} holds, if
\begin{equation} \label{GrindEQ__3_26_}
\frac{\sigma+1}{q_{4}}>\max\left\{\frac{\sigma+1}{2}-\frac{4}{da_{2}'},\frac{(\sigma+1)s}{d}\right\}.
\end{equation}
In view of \eqref{GrindEQ__3_25_} and \eqref{GrindEQ__3_26_}, it suffices to choose $q_{4}$ satisfying
\begin{equation} \nonumber
\left\{\begin{array}{l}
{\max\left\{\frac{1}{2}-\frac{b+s}{d}-\frac{4}{a_{2}d},\frac{\sigma+1}{2}-\frac{4}{da_{2}'},\frac{(\sigma+1)s}{d},\frac{(d-4)(\sigma+1)}{2d}\right\}<\frac{\sigma+1}{q_{4}},}\\
{\frac{\sigma+1}{q_{4}}<\min\left\{\frac{\sigma+1}{2},1-\frac{1}{b_{2}}+\frac{(\sigma+1)s}{d}\right\},}
\end{array}\right.
\end{equation}
which is possible, if
\begin{equation} \label{GrindEQ__3_27_}
\frac{d}{b_{2}}<d-\frac{(\sigma+1)(d-2s-4)}{2},
\end{equation}
\begin{equation} \label{GrindEQ__3_28_}
\frac{4}{a_{2}}<\min\left\{\frac{d\sigma}{2}+b+s,\frac{d}{2}+b+(\sigma+2)s-\frac{d}{b_{2}},\frac{d}{2}+(\sigma+1)s+4-\frac{d\sigma}{2}-\frac{d}{b_{2}}\right\}.
\end{equation}
Noticing $(a_{2},b_{2})\in A$, $s<\frac{d}{2}$ and $\sigma<\sigma_{c}(s)$ and using \eqref{GrindEQ__1_8_}, \eqref{GrindEQ__3_3_}, \eqref{GrindEQ__3_21_}, \eqref{GrindEQ__3_24_}, \eqref{GrindEQ__3_27_}, \eqref{GrindEQ__3_28_}, it suffices to choose $a_{2},b_{2}$ satisfying
\begin{equation} \label{GrindEQ__3_29_}
\left\{\begin{array}{l}
{\max\left\{0,\frac{d}{2}-b-\frac{d\sigma}{2}\right\}<\frac{d}{b_{2}}<\min\left\{\frac{d}{2},d-\frac{(\sigma+1)(d-2s-4)}{2}\right\},}\\
{\frac{(d-2s-4)\sigma}{2}-2<\frac{4}{a_{2}}<\frac{d\sigma}{2}+b+s,}\\
{\frac{d}{2}-\frac{d}{b_{2}}\le\frac{4}{a_{2}}\le \min\left\{2,d-\frac{2d}{b_{2}},s+\frac{d}{2}-\frac{d}{b_{2}}\right\}.}
\end{array}\right.
\end{equation}
A computation shows that there exist $a_{2}$, $b_{2}$ satisfying \eqref{GrindEQ__3_29_} and we omit the details.

Our purpose is to choose $a_{2}$ and $b_{2}$ satisfying \eqref{GrindEQ__3_29_} so that the assumption \eqref{GrindEQ__3_18_} becomes the best.
If $s\le1$, the assumption \eqref{GrindEQ__3_18_} is trivial, due to the equation (8) in \eqref{GrindEQ__3_15_}. Therefore, it suffices to consider the case $1<s<\frac{d}{2}$ with $d\ge 3$.

If $\sigma>\frac{2-2b}{d}$, we can see that $\left(a_{2},b_{2}\right):=\left(2, \frac{2d}{d-2}\right)$ satisfies \eqref{GrindEQ__3_29_}. And the assumption \eqref{GrindEQ__3_18_} becomes
\begin{equation} \label{GrindEQ__3_30_}
\sigma\ge \left\lceil{ s-\gamma_{a_{2}',b_{2}'}-4}\right\rceil-1=\left\lceil{s}\right\rceil-2.
\end{equation}

If $\sigma\le \frac{2-2b}{d}$, we can see that
$\frac{4}{a_{2}}:=d-\frac{2d}{b_{2}}$ and $\frac{d}{b_{2}}:=\left(b+\frac{(d-2s)\sigma}{2}\right)^{+}$ satisfy \eqref{GrindEQ__3_29_}. And the assumption \eqref{GrindEQ__3_18_} becomes
\begin{equation} \label{GrindEQ__3_31_}
\sigma\ge \left\lceil{ s-\gamma_{a_{2}',b_{2}'}-4}\right\rceil-1=\left[s-b-\frac{d\sigma}{2}\right].
\end{equation}

Let us analyze the assumptions \eqref{GrindEQ__3_30_} and \eqref{GrindEQ__3_31_}.

First, we consider the case $s\le2$. If $\sigma>\frac{2-2b}{d}$, then the assumption \eqref{GrindEQ__3_30_} is trivial. If $\frac{2-2b}{d}\ge \sigma>\frac{2s-2-2b}{d}$ with $s>2$, then the assumption \eqref{GrindEQ__3_31_} is trivial. If $\sigma\le \frac{2s-2-2b}{d}$, then \eqref{GrindEQ__3_31_} implies that $\sigma\ge 1$, which is impossible.
Therefore, the lower bound on $\sigma$ for $s\le 2$ becomes $\sigma>\frac{2s-2-2b}{d}$.

Next, we consider the case $s>2$. If $\sigma\le \frac{2-2b}{d}$, then the assumption \eqref{GrindEQ__3_31_} implies that $\sigma\ge 1$, which contracts $\sigma\le \frac{2-2b}{d}$.
Noticing $\left\lceil{s}\right\rceil-2\ge 1>\frac{2-2b}{d}$, the lower bound on $\sigma$ in the case $s>2$ becomes $\sigma\ge\left\lceil{s}\right\rceil-2$.
This completes the proof in the case $s<\frac{d}{2}$.

{\bf Case 2.} We consider the case $s>\frac{d}{2}$.

First, we prove \eqref{GrindEQ__3_1_}. Let $(a_{1},b_{1})\in S$, i.e., we have
$$
\frac{2}{a_{1}}+\frac{d}{b_{1}}=\frac{d}{2},~s-\gamma_{a_{1}',b_{1}'}-4=s-\frac{2}{a_{1}}.
$$
For $(a_{1},b_{1})\in S$ satisfying
\begin{equation} \label{GrindEQ__3_32_}
\left\{\begin{array}{l}
{2\le\alpha_{1},\alpha_{2}<\infty,~2\le \beta_{1}\le b_{1}},\\
{\frac{1}{b_{1}'}-\frac{\sigma}{\alpha_{1}}-\frac{1}{\beta_{1}}>\frac{b}{d},~\frac{1}{b_{1}'}-\frac{\sigma}{\alpha_{2}}>\frac{b+s-\frac{2}{a_{1}}}{d}},\\
\end{array}\right.
\end{equation}
it follows from Lemma \ref{lem 2.6.}, Corollary \ref{cor 2.5.}, Corollary \ref{cor 2.3.} and Remark \ref{rem 3.1.} that
\begin{eqnarray}\begin{split} \label{GrindEQ__3_33_}
&\left\| \chi(x)|x|^{-b} |u|^{\sigma}u\right\| _{\dot{H}_{b_{1}'}^{s-\frac{2}{a_{1}}}}\\
&~~~~~~~~~\lesssim \left\|\chi(x)|x|^{-b} \right\| _{L^{\gamma _{1}}} \left\| |u|^{\sigma}u\right\|_{\dot{H}_{r_{1}}^{s-\frac{2}{a_{1}}}}
          +\left\| \chi(x)|x|^{-b}\right\| _{\dot{H}_{\bar{\gamma}_{1}}^{s-\frac{2}{a_{1}}}} \left\||u|^{\sigma}u\right\|_{L^{\bar{r}_{1}}}\\
&~~~~~~~~~\lesssim \left\|u\right\|^{\sigma} _{L^{\alpha_{1}}}\left\|u\right\| _{\dot{H}_{\beta_{1}}^{s-\frac{2}{a_{1}}}}
          +\left\|u\right\|^{\sigma+1} _{L^{\bar{\alpha}_{1}}}
        \lesssim \left\|u\right\|^{\sigma+1}_{H^{s}},
\end{split}\end{eqnarray}
where
\begin{equation} \label{GrindEQ__3_34_}
\left\{\begin{array}{l}
{\frac{1}{r_{1}}:=\frac{\sigma}{\alpha_{1}}+\frac{1}{\beta_{1}},~\frac{1}{\gamma _{1}}:=\frac{1}{b_{1}'}-\frac{1}{r_{1}},}\\
{\frac{1}{\bar{r}_{1}}:=\frac{\sigma+1}{\bar{\alpha}_{1}},~\frac{1}{\bar{\gamma}_{1}}:=\frac{1}{b_{1}'}-\frac{1}{\bar{r}_{1}}.}\\
\end{array}\right.
\end{equation}
Here, we need the assumption that if $\sigma$ is not an even integer,
\begin{equation} \label{GrindEQ__3_35_}
\sigma\ge \left\lceil s-\frac{2}{a_{1}}\right\rceil-1.
\end{equation}
Using \eqref{GrindEQ__3_33_} and H\"{o}lder inequality, we immediately get \eqref{GrindEQ__3_1_} with $q_{1}=2$.
Therefore, it remains to choose $(a_{1},b_{1})\in S$ satisfying \eqref{GrindEQ__3_32_} so that the assumption \eqref{GrindEQ__3_35_} becomes the best.
It is easy to see that we can choose $\alpha_{1}$, $\bar{\alpha}_{1}$ and $\beta_{1}$ satisfying \eqref{GrindEQ__3_32_}, if
\begin{equation} \label{GrindEQ__3_36_}
\left\{\begin{array}{l}
{\frac{1}{b_{1}}\le\frac{\sigma}{\alpha_{1}}+\frac{1}{\beta_{1}}\le \frac{\sigma+1}{2},}\\
{\frac{\sigma}{\alpha_{1}}+\frac{1}{\beta_{1}}<1-\frac{1}{b_{1}}-\frac{b}{d}},
\end{array}\right.
\end{equation}
and
\begin{equation} \label{GrindEQ__3_37_}
\left\{\begin{array}{l}
{0<\frac{\sigma+1}{\bar{\alpha}_{1}}\le \frac{\sigma+1}{2},}\\
{\frac{\sigma+1}{\bar{\alpha}_{1}}<\frac{3}{2}-\frac{b+s}{d}-\frac{2}{b_{1}}.}
\end{array}\right.
\end{equation}
Since $s\ge \frac{d}{2}$, we can see that \eqref{GrindEQ__3_36_} and \eqref{GrindEQ__3_37_} hold, if $\frac{2}{b_{1}}<\frac{3}{2}-\frac{b+s}{d}$.
If $d\ge 3$, we can take $b_{1}:=\frac{2d}{d-2}$ using the fact $b<2+\frac{d}{2}-s$.
Here, the assumption \eqref{GrindEQ__3_35_} becomes $\sigma\ge \lceil s\rceil-2$. \
If $d\le 2$, we can take $b_{1}(>2)$ large enough such that $\frac{2}{b_{1}}<\frac{3}{2}-\frac{b+s}{d}$ and $\left\lceil s-\frac{d}{2}+\frac{2}{b{1}}\right\rceil=\left[s-\frac{d}{2}\right]$.
This completes the proof of \eqref{GrindEQ__3_1_}.

Next, we prove \eqref{GrindEQ__3_2_}.
For $(a_{2},b_{2})\in A$ and $(p_{i},q_{i})\in B_{0}$ $(i=2,3,4)$ satisfying
\begin{equation} \label{GrindEQ__3_38_}
\left\{\begin{array}{ll}
{0<\frac{1}{\alpha_{2}}\le\frac{1}{q_{2}},\frac{1}{q_{3}}-\frac{\gamma_{a_{2}',b_{2}'}+4}{d}\le\frac{1}{\beta_{2}}\le\frac{1}{q_{3}}},~&~~~~~~~~{(1)}\\
{0<\frac{1}{b_{2}'}-\frac{\sigma}{\alpha_{2}}-\frac{1}{\beta_{2}}<\frac{b}{d},}~&~~~~~~~~{(2)}\\
{\frac{1}{a_{2}'}-\frac{\sigma}{p_{2}}-\frac{1}{p_{3}}>0,}~&~~~~~~~~{(3)}\\
{\frac{1}{q_{3}}>\frac{\gamma_{a_{2}',b_{2}'}+4}{d},}~&~~~~~~~~{(4)}\\
{0<\frac{1}{\alpha_{4}}\le\frac{1}{q_{4}},}~&~~~~~~~~{(5)}\\
{0<\frac{1}{b_{2}'}-\frac{\sigma+1}{\alpha_{4}}<\frac{b+s-\gamma_{a_{2}',b_{2}'}-4}{d},}~&~~~~~~~~{(6)}\\
{\frac{1}{a_{2}'}-\frac{\sigma+1}{p_{4}}>0,}~&~~~~~~~~{(7)}\\
{s-\gamma_{a_{2}',b_{2}'}-4\ge0,\gamma_{a_{2}',b_{2}'}+4\ge 0,}~&~~~~~~~~{(8)}\\
\end{array}\right.
\end{equation}
we get \eqref{GrindEQ__3_16_} and \eqref{GrindEQ__3_17_} by using the similar argument as in the proof of \eqref{GrindEQ__3_2_} in Case 1 and we omit the details. Here, we need to assume \eqref{GrindEQ__3_18_}, if $\sigma$ is not an even integer.
Then, \eqref{GrindEQ__3_2_} follows directly from the H\"{o}lder inequality, \eqref{GrindEQ__3_16_} and the equations (3), (7) in the system \eqref{GrindEQ__3_38_}.
Therefore, it remains to choose $(a_{2},b_{2})\in A$ and $(p_{i},q_{i})\in B_{0}$ $(i=2,3,4)$ satisfying \eqref{GrindEQ__3_38_} so that the condition \eqref{GrindEQ__3_18_} becomes the best.
In fact, it is easy to see that we can choose $\alpha_{2}$ and $\beta_{2}$ satisfying the equations (1) and (2) in the system \eqref{GrindEQ__3_38_}, if
\begin{equation} \label{GrindEQ__3_39_}
1-\frac{1}{b_{2}}-\frac{b}{d}<\frac{\sigma}{q_{2}}+\frac{1}{q_{3}},~\frac{1}{q_{3}}<\frac{1}{2}+\frac{4}{a_{2}d}.
\end{equation}
In view of (3) and (4) in the system \eqref{GrindEQ__3_38_}, we can choose ($p_{i}, q_{i})\in B_{0}$ $(i=1,2)$ satisfying \eqref{GrindEQ__3_38_}, if $q_{2}$ and $q_{3}$ satisfy
\begin{equation} \label{GrindEQ__3_40_}
-\frac{1}{2}+\frac{4}{a_{2}d}+\frac{1}{b_{2}}<\frac{1}{q_{3}}<\frac{1}{2},
\end{equation}
\begin{equation} \label{GrindEQ__3_41_}
\max\left\{1-\frac{1}{b_{2}}-\frac{b}{d},\frac{\sigma+1}{2}-\frac{4}{d}+\frac{4}{a_{2}d}\right\}<\frac{\sigma}{q_{2}}+\frac{1}{q_{3}}<\frac{\sigma+1}{2}.
\end{equation}
And \eqref{GrindEQ__3_40_} and \eqref{GrindEQ__3_41_} hold, if
\begin{equation} \label{GrindEQ__3_42_}
\frac{d}{b_{2}}>\frac{d}{2}-b-\frac{d\sigma}{2},~\frac{4}{a_{2}}<d-\frac{d}{b_{2}}.
\end{equation}
On the other hand, an easy computation shows that there exist $\alpha_{4}$ satisfying the equations (5) and (6) in the system \eqref{GrindEQ__3_38_}, if
\begin{equation} \label{GrindEQ__3_43_}
\frac{1}{2}-\frac{b+s}{d}+\frac{4}{a_{2}d}<\frac{\sigma+1}{q_{4}}.
\end{equation}
And the equation (7) in the system \eqref{GrindEQ__3_38_} is equivalent to
\begin{equation} \label{GrindEQ__3_44_}
\frac{\sigma+1}{q_{4}}>\frac{\sigma+1}{2}-\frac{4}{da_{2}'}.
\end{equation}
In view of \eqref{GrindEQ__3_43_} and \eqref{GrindEQ__3_44_}, it suffices to choose $q_{4}$ satisfying
\begin{equation} \label{GrindEQ__3_45_}
\max\left\{\frac{1}{2}-\frac{b+s}{d}-\frac{4}{a_{2}d},\frac{\sigma+1}{2}-\frac{4}{da_{2}'},\frac{(\sigma+1)s}{d},\frac{(d-4)(\sigma+1)}{2d}\right\}
<\frac{\sigma+1}{q_{4}}<\frac{\sigma+1}{2},
\end{equation}
which is possible provided that
\begin{equation} \label{GrindEQ__3_46_}
\frac{4}{a_{2}}<\frac{d\sigma}{2}+b+s.
\end{equation}
Noticing $(a_{2},b_{2})\in A$ and $\sigma<\sigma_{c}(s)$ and using \eqref{GrindEQ__1_8_}, \eqref{GrindEQ__3_3_}, \eqref{GrindEQ__3_42_}, \eqref{GrindEQ__3_46_}, it suffices to choose $a_{2},b_{2}$ satisfying
\begin{equation} \label{GrindEQ__3_47_}
\left\{\begin{array}{l}
{\max\left\{0,\frac{d}{2}-b-\frac{d\sigma}{2}\right\}<\frac{d}{b_{2}}<\frac{d}{2},}\\
{\frac{4}{a_{2}}<\frac{d\sigma}{2}+b+s,}\\
{\frac{d}{2}-\frac{d}{b_{2}}\le\frac{4}{a_{2}}\le \min\left\{2,d-\frac{2d}{b_{2}},s+\frac{d}{2}-\frac{d}{b_{2}}\right\}.}
\end{array}\right.
\end{equation}
A computation shows that there exist $a_{2}$, $b_{2}$ satisfying \eqref{GrindEQ__3_47_} and we omit the details.

Our purpose is to choose $a_{2}$ and $b_{2}$ satisfying \eqref{GrindEQ__3_47_} so that the assumption \eqref{GrindEQ__3_18_} becomes the best.
If $s\le1$, the assumption \eqref{GrindEQ__3_18_} is trivial, due to the equation (8) in \eqref{GrindEQ__3_38_}.
If $s>1$ and $d\ge 3$, we use the same argument as in the proof of \eqref{GrindEQ__3_2_} in Case 1 and we omit the details.
Finally, we consider the case $s>1$ and $d=1,2$.

If $\sigma\le \frac{d-2b}{d}$, we can see that $\frac{4}{a_{2}}:=d-\frac{2d}{b_{2}}$ and $\frac{d}{b_{2}}:=\left(b+\frac{(d-2s)\sigma}{2}\right)^{+}$ satisfy \eqref{GrindEQ__3_47_}.
Here, the assumption \eqref{GrindEQ__3_18_} becomes
\begin{equation} \label{GrindEQ__3_48_}
\sigma\ge \lceil{ s-\gamma_{a_{2}',b_{2}'}-4}\rceil-1=\left[s-b-\frac{d\sigma}{2}\right].
\end{equation}

If $\sigma>\frac{d-2b}{d}$, then we can see that $\frac{4}{a_{2}}:=d-\frac{2d}{b_{2}}$ and $\frac{d}{b_{2}}:=\left(0\right)^{+}$ satisfy \eqref{GrindEQ__3_47_}. Here, the assumption \eqref{GrindEQ__3_18_} becomes
\begin{equation} \label{GrindEQ__3_49_}
\sigma\ge \lceil{ s-\gamma_{a_{2}',b_{2}'}-4}\rceil-1=\left[s-\frac{d}{2}\right].
\end{equation}
Let us analyze the assumptions \eqref{GrindEQ__3_48_} and \eqref{GrindEQ__3_49_}.

First, we consider the case $\frac{d}{2}\le s< 1+\frac{d}{2}$.
If $\sigma>\frac{d-2b}{d}$, then the assumption \eqref{GrindEQ__3_49_} is trivial.
If $\frac{d-2b}{d}\ge \sigma>\frac{2s-2-2b}{d}$, then the assumption \eqref{GrindEQ__3_48_} is trivial.
If $\sigma\le \frac{2s-2-2b}{d}$, then \eqref{GrindEQ__3_48_} implies that $\sigma\ge 1$, which is impossible.
Therefore, the lower bound on $\sigma$ for $\frac{d}{2}\le s<1+\frac{d}{2}$ becomes
\begin{equation} \label{GrindEQ__3_50_}
\sigma>\frac{2s-2-2b}{d}.
\end{equation}

Next, we consider the case $s\ge\frac{d}{2}+1$.
If $\sigma\le \frac{d-2b}{d}$, then the assumption \eqref{GrindEQ__3_48_} implies that $\sigma\ge 1$, which is impossible.
Noticing $\left[s-\frac{d}{2}\right]\ge 1>\frac{d-2b}{d}$, the lower bound on $\sigma$ in the case $s\ge1+\frac{d}{2}$ becomes
\begin{equation} \label{GrindEQ__3_51_}
\sigma\ge\left[s-\frac{d}{2}\right],
\end{equation}
this completes the proof.
\end{proof}

\begin{lemma}\label{lem 3.3.}
Let $d\in \mathbb N$, $s\ge 0$, $0<b<\min\left\{4,d\right\}$ and $0<\sigma<\sigma_{c}(s)$.
Then there exist $(a_{i},b_{i})\in B_{0}$ $(i=3,4)$, $(p_{j},q_{j})\in B_{0}$ $(j=5,6,7)$ and $\theta>0$ such that
\begin{equation} \label{GrindEQ__3_52_}
\left\|\chi(x)|x|^{-b}|u|^{\sigma}v\right\|_{L^{a_{3}'}(I,L^{b_{3}'})}
\lesssim |I|^{\theta}\left\|u\right\|^{\sigma}_{L^{p_{5}}(I,H_{q_{5}}^{s})}\left\|v\right\|_{L^{p_{6}}(I,L^{q_{6}})},
\end{equation}
\begin{equation} \label{GrindEQ__3_53_}
\left\|(1-\chi(x))|x|^{-b}|u|^{\sigma}v\right\|_{L^{a_{4}'}(I,L^{b_{4}'})}
\lesssim |I|^{\theta}\left\|u\right\|^{\sigma}_{L^{p_{7}}(I,H_{q_{7}}^{s})}\left\|v\right\|_{L^{p_{7}}(I,L^{q_{7}})}.
\end{equation}
where $I(\subset \mathbb R)$ is an interval and $\chi\in C_{0}^{\infty}(\mathbb R^{d})$ is given in Remark \ref{rem 3.1.}.
\end{lemma}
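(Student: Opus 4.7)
The plan is to mimic the two-region decomposition used in Lemma \ref{lem 3.2.}, writing $|x|^{-b}=\chi(x)|x|^{-b}+(1-\chi(x))|x|^{-b}$. The key simplification is that the target norm in \eqref{GrindEQ__3_52_} and \eqref{GrindEQ__3_53_} carries no fractional derivative, so the fractional product rule (Lemma \ref{lem 2.6.}) and Corollary \ref{cor 2.5.} are not needed; plain H\"older inequalities in space combined with the Sobolev embeddings of Lemma \ref{lem 2.1.} and Corollary \ref{cor 2.3.} suffice. Consequently no assumption on $\sigma$ being an integer (or a lower bound on $\sigma$) arises, which is why the hypotheses here are cleaner than in Lemma \ref{lem 3.2.}.

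For the inner estimate \eqref{GrindEQ__3_52_}, I would pick $(a_{3},b_{3})\in B_{0}$, $(p_{5},q_{5}),(p_{6},q_{6})\in B_{0}$ and exponents $\alpha,\delta,\gamma_{3}\in(1,\infty)$ with
\[
\tfrac{1}{b_{3}'}=\tfrac{1}{\gamma_{3}}+\tfrac{\sigma}{\alpha}+\tfrac{1}{\delta},\qquad H_{q_{5}}^{s}\hookrightarrow L^{\alpha},\qquad L^{q_{6}}\hookrightarrow L^{\delta},
\]
the last two being consequences of Corollary \ref{cor 2.3.}. Applying H\"older in space yields
\[
\bigl\|\chi(x)|x|^{-b}|u|^{\sigma}v\bigr\|_{L^{b_{3}'}}\lesssim \|\chi(x)|x|^{-b}\|_{L^{\gamma_{3}}}\,\|u\|_{L^{\alpha}}^{\sigma}\,\|v\|_{L^{\delta}},
\]
and by Remark \ref{rem 3.1.} (with $s=0$) the weight factor is finite precisely when $\tfrac{1}{b_{3}'}-\tfrac{\sigma}{\alpha}-\tfrac{1}{\delta}>\tfrac{b}{d}$. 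H\"older in time with a strict gap $\tfrac{1}{a_{3}'}-\tfrac{\sigma}{p_{5}}-\tfrac{1}{p_{6}}>0$ then produces the factor $|I|^{\theta}$.

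For the outer estimate \eqref{GrindEQ__3_53_}, the same scheme applies with a single admissible pair $(p_{7},q_{7})\in B_{0}$ used for both $u$ and $v$, leading to $\tfrac{1}{b_{4}'}=\tfrac{1}{\gamma_{4}}+\tfrac{\sigma+1}{\alpha}$ with $H_{q_{7}}^{s}\hookrightarrow L^{\alpha}$ and $L^{q_{7}}\hookrightarrow L^{\alpha}$ (so $\alpha\ge q_{7}$); Remark \ref{rem 3.1.} requires the reverse inequality $\tfrac{1}{b_{4}'}-\tfrac{\sigma+1}{\alpha}<\tfrac{b}{d}$, and the time H\"older gap $\tfrac{1}{a_{4}'}-\tfrac{\sigma+1}{p_{7}}>0$ yields $|I|^{\theta}$.

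The main obstacle, as in Lemma \ref{lem 3.2.}, is to verify that all of these linear constraints, together with biharmonic admissibility $\tfrac{4}{a_{i}}+\tfrac{d}{b_{i}}=\tfrac{d}{2}$ and $\tfrac{4}{p_{j}}+\tfrac{d}{q_{j}}=\tfrac{d}{2}$ (with $p_{j},a_{i}>2$), are simultaneously solvable in the entire subcritical range $0<\sigma<\sigma_{c}(s)$. I would split the argument into $0\le s<\tfrac{d}{2}$ and $s\ge\tfrac{d}{2}$. In the first case, setting $\tfrac{1}{\alpha}=\tfrac{1}{q_{5}}-\tfrac{s}{d}$ turns the weight and time-gap conditions into two linear inequalities in $\tfrac{1}{q_{j}}$ and $\tfrac{1}{a_{i}}$ whose compatibility follows from $\sigma<\sigma_{c}(s)$ together with $b<\min\{4,d\}$, by the same linear-programming computation as in \eqref{GrindEQ__3_9_}--\eqref{GrindEQ__3_13_}. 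In the second case, Sobolev embedding from $H^{s}$ to $L^{\infty}$ allows $\alpha$ arbitrarily large, so the constraints reduce to choosing $(a_{i},b_{i})$ and $(p_{j},q_{j})$ with $\tfrac{d}{b_{i}}<\tfrac{d}{2}$ and a small enough strict gap in time, which is always feasible when $0<b<\min\{4,d\}$. Since the resulting inequalities are the non-derivative analogues of those already solved in Lemma \ref{lem 3.2.}, the explicit choices of $(a_{i},b_{i})$ and $(p_{j},q_{j})$ can be lifted from there, and the proof is complete.
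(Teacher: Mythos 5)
Your overall strategy --- pure H\"older plus Sobolev embedding and Remark \ref{rem 3.1.}, with no fractional product rule and hence no extra hypothesis on $\sigma$, split into the cases $s<\frac{d}{2}$ and $s\ge\frac{d}{2}$ --- is exactly the paper's, and your treatment of the inner estimate \eqref{GrindEQ__3_52_} matches the paper's argument once you repair one slip: on $\mathbb{R}^{d}$ there is no embedding $L^{q_{6}}\hookrightarrow L^{\delta}$ for $\delta\neq q_{6}$ (Corollary \ref{cor 2.3.} gives no such thing between two pure Lebesgue spaces), so the factor $v$ must simply be held in $L^{q_{6}}$ itself, i.e.\ $\delta=q_{6}$. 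For the inner piece this costs nothing, since all the flexibility sits in $\alpha$ and in the weight exponent $\gamma_{3}$.

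The same slip is fatal in your outer estimate \eqref{GrindEQ__3_53_}. You ask for a single exponent $\alpha$ with both $H^{s}_{q_{7}}\hookrightarrow L^{\alpha}$ and $L^{q_{7}}\hookrightarrow L^{\alpha}$; the second condition forces $\alpha=q_{7}$, so the Sobolev gain on $u$ is discarded and your scheme reduces to the $s=0$ estimate. The combined space and time H\"older conditions then impose an upper bound on $\sigma$ of the order of $\sigma_{c}(0)=\frac{8-2b}{d}$, not $\sigma_{c}(s)=\frac{8-2b}{d-2s}$, so the constraints are \emph{not} solvable on the whole claimed range when $s>0$ --- they are not ``the non-derivative analogues of those already solved in Lemma \ref{lem 3.2.}''. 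The paper's proof keeps $v$ in $L^{q_{7}}$, puts $u$ in $L^{\alpha}$ with the full gain $\frac{1}{\alpha}=\frac{1}{q_{7}}-\frac{s}{d}$, and bounds the weight $(1-\chi)|x|^{-b}$ simply in $L^{\infty}$ (Remark \ref{rem 3.1.} is not even needed there); the explicit choice $a_{4}=p_{7}=\frac{8(\sigma+2)}{\sigma(d-2s)}$, $b_{4}=q_{7}=\frac{d(\sigma+2)}{d+\sigma s}$ then makes the space identity exact and gives the time gap $\theta=1-\frac{\sigma(d-2s)}{8}>0$ for every $\sigma<\sigma_{c}(s)$. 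You need this two-exponent splitting (or an equivalent one) to close the outer estimate; the remainder of your outline is sound.
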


\begin{proof}
We divide the proof in two cases: $0\le s<\frac{d}{2}$ and $s\ge \frac{d}{2}$.

{\bf Case 1.} We consider the case $0\le s<\frac{d}{2}$.

First, we prove \eqref{GrindEQ__3_52_}.
For $(a_{3},b_{3})\in B_{0}$ and $(p_{i},q_{i})\in B_{0}$ $(i=5,6)$ satisfying
\begin{equation} \label{GrindEQ__3_54_}
\left\{\begin{array}{ll}
{\frac{1}{q_{5}}-\frac{s}{d}\le\frac{1}{\alpha_{5}}\le\frac{1}{q_{5}};\;\frac{1}{q_{5}}>\frac{s}{d}},\\
{\frac{1}{b_{3}'}-\frac{\sigma}{\alpha_{5}}-\frac{1}{q_{6}}>\frac{b}{d},}\\
{\frac{1}{a_{3}'}-\frac{\sigma}{p_{5}}-\frac{1}{p_{6}}>0,}\\
\end{array}\right.
\end{equation}
it follows from the H\"{o}lder inequality, Remark \ref{rem 3.1.} and Corollary \ref{cor 2.3.} that
\begin{eqnarray}\begin{split} \label{GrindEQ__3_55_}
\left\|\chi(x)|x|^{-b}|u|^{\sigma}v\right\|_{ L^{b_{3}'}}&\lesssim
\left\|\chi(x)|x|^{-b}\right\| _{L^{\hat{\gamma}_{1}}} \left\|u\right\|^{\sigma} _{L^{\alpha_{5}}}\left\|v\right\| _{L^{q_{6}}}
\lesssim \left\|u\right\|^{\sigma}_{H_{q_{5}}^{s}}\left\|v\right\|_{L^{q_{6}}},
\end{split}\end{eqnarray}
where $\frac{1}{\hat{\gamma}_{1}}:=\frac{1}{b_{3}'}-\frac{\sigma}{\alpha_{5}}-\frac{1}{\hat{q_{2}}}$.
Using the third equation in \eqref{GrindEQ__3_54_}, \eqref{GrindEQ__3_55_} and the H\"{o}lder inequality, we immediately get \eqref{GrindEQ__3_52_}.
Hence, it suffices to prove that there exist $(a_{3},b_{3})\in B_{0}$ and $(p_{i},q_{i})\in B_{0}$ $(i=5,6)$ satisfying \eqref{GrindEQ__3_54_}.
In fact, we can take $\alpha_{5}$ satisfying \eqref{GrindEQ__3_54_}, if
\begin{equation} \label{GrindEQ__3_56_}
\frac{1}{q_{6}}<1-\frac{1}{b_{3}}-\frac{b}{d}-\frac{\sigma}{q_{5}}+\frac{\sigma s}{d}.
\end{equation}
And the third equation \eqref{GrindEQ__3_54_} is equivalent to
\begin{equation} \label{GrindEQ__3_57_}
\frac{1}{q_{6}}>\frac{\sigma+2}{2}-\frac{4}{d}-\frac{\sigma}{q_{5}}-\frac{1}{b_{3}}.
\end{equation}
Hence, it suffices to choose $q_{6}$ satisfying
\begin{equation} \label{GrindEQ__3_58_}
\max\left\{0,\frac{d-4}{2d},\frac{\sigma+2}{2}-\frac{4}{d}-\frac{\sigma}{q_{5}}-\frac{1}{b_{3}}\right\}<\frac{1}{q_{6}}
<\min\left\{\frac{1}{2},1-\frac{1}{b_{3}}-\frac{b}{d}-\frac{\sigma}{q_{5}}+\frac{\sigma s}{d}\right\},
\end{equation}
which is possible provided that $\sigma<\sigma_{c}(s)$ and $q_{5}$ satisfies
\begin{equation} \label{GrindEQ__3_59_}
\frac{\sigma+1}{2}-\frac{4}{d}-\frac{1}{b_{3}}<\frac{\sigma}{q_{5}}
<\min\left\{1-\frac{1}{b_{3}}-\frac{b}{d}+\frac{\sigma s}{d},\frac{1}{2}-\frac{1}{b_{3}}-\frac{b}{d}+\frac{\sigma s}{d}+\frac{2}{d}\right\}.
\end{equation}
In view of \eqref{GrindEQ__3_59_} and the first equation in the system \eqref{GrindEQ__3_54_}, it suffices to choose $q_{5}$ and $b_{3}$ satisfying
\begin{equation} \label{GrindEQ__3_60_}
\left\{\begin{array}{l}
{\max\left\{\frac{\sigma s}{d},\frac{(d-4)\sigma}{2d},\frac{\sigma+1}{2}-\frac{4}{d}-\frac{1}{b_{3}}\right\}<\frac{\sigma}{q_{5}},}\\
{\frac{\sigma}{q_{5}}<\min\left\{\frac{\sigma}{2},1-\frac{1}{b_{3}}-\frac{b}{d}+\frac{\sigma s}{d},\frac{1}{2}-\frac{1}{b_{3}}-\frac{b}{d}+\frac{\sigma s}{d}+\frac{2}{d}\right\}}.
\end{array}\right.
\end{equation}
And this is possible, if we choose $b_{3}$ satisfying
\begin{equation}\label{GrindEQ__3_61_}
\frac{1}{b_{3}}<\min\left\{1-\frac{b}{d},1-\frac{b}{d}-\frac{(d-2s-4)\sigma}{2d},\frac{1}{2}+\frac{2-b}{d},\frac{1}{2}+\frac{2-b}{d}-\frac{(d-2s-4)\sigma}{2d}\right\}.
\end{equation}
Using the facts that $\sigma<\sigma_{c}(s)$ and $b<\min\left\{4,d\right\}$, we can easily prove that there exist $(a_{3},b_{3})\in B_{0}$ satisfying \eqref{GrindEQ__3_61_}. This complete the proof of \eqref{GrindEQ__3_52_}.

Next, we prove \eqref{GrindEQ__3_53_}.
Putting
$$
a_{4}=p_{7}:=\frac{8(\sigma+2)}{\sigma(d-2s)},~b_{4}=q_{7}:=\frac{d(\sigma+2)}{d+\sigma s},
$$
we can easily see that $(a_{4}, b_{4})=(p_{7}, q_{7})\in B_{0}$. Furthermore, there hold the following identities:
\begin{equation}\label{GrindEQ__3_62_}
\frac{1}{b_{4}'}=\sigma\left(\frac{1}{q_{7}}-\frac{s}{d}\right)+\frac{1}{q_{7}},~ \frac{1}{a_{4}'}-\frac{\sigma+1}{p_{7}}=1-\frac{\sigma(d-2s)}{8}:=\theta.
\end{equation}
Using \eqref{GrindEQ__3_62_}, Lemma \ref{lem 2.1.} and the H\"{o}lder inequality, we have
\begin{equation} \label{GrindEQ__3_63_}
\left\|(1-\chi(x))|x|^{-b}|u|^{\sigma}v\right\|_{L^{b_{4}'}}
\lesssim \left\|(1-\chi(x))|x|^{-b} \right\| _{L^{\infty}} \left\| |u|^{\sigma}v\right\| _{L^{b_{4}'}}
\lesssim \left\|u\right\|^{\sigma}_{\dot{H}_{q_{7} }^{s}}\left\|v\right\|_{L^{q_{7}}}.
\end{equation}
\eqref{GrindEQ__3_53_} follows directly from \eqref{GrindEQ__3_62_}, \eqref{GrindEQ__3_63_} and the H\"{o}lder inequality.

{\bf Case 2.} Let us consider the case $s\ge \frac{d}{2}$.

First, we prove \eqref{GrindEQ__3_52_}.
If $(a_{3},b_{3})\in B_{0}$ and $(p_{i},q_{i})\in B_{0}$ $(i=5,6)$ satisfy
\begin{equation} \label{GrindEQ__3_64_}
\left\{\begin{array}{ll}
{0<\frac{1}{\alpha_{5}}\le\frac{1}{q_{5}},}\\
{\frac{1}{b_{3}'}-\frac{\sigma}{\alpha_{5}}-\frac{1}{q_{6}}>\frac{b}{d},}\\
{\frac{1}{a_{3}'}-\frac{\sigma}{p_{5}}-\frac{1}{p_{6}}>0,}\\
\end{array}\right.
\end{equation}
then we get \eqref{GrindEQ__3_55_} by using the H\"{o}lder inequality, Remark \ref{rem 3.1.} and Corollary \ref{cor 2.3.}.
Using the third equation in \eqref{GrindEQ__3_64_}, \eqref{GrindEQ__3_55_} and the H\"{o}lder inequality, we immediately get \eqref{GrindEQ__3_52_}. Hence, it suffices to prove that there exist $(a_{3},b_{3})\in B_{0}$ and $(p_{i},q_{i})\in B_{0}$ $(i=5,6)$ satisfying \eqref{GrindEQ__3_64_}.
In fact, we can take $\alpha_{5}$ satisfying \eqref{GrindEQ__3_64_}, if
\begin{equation} \label{GrindEQ__3_65_}
\frac{1}{q_{6}}<1-\frac{1}{b_{3}}-\frac{b}{d}.
\end{equation}
In view of \eqref{GrindEQ__3_65_} and \eqref{GrindEQ__3_57_}, it suffices to choose $q_{6}$ satisfying
\begin{equation} \label{GrindEQ__3_66_}
\max\left\{0,\frac{d-4}{2d},\frac{\sigma+2}{2}-\frac{4}{d}-\frac{\sigma}{q_{5}}-\frac{1}{b_{3}}\right\}<\frac{1}{q_{6}}
<\min\left\{\frac{1}{2},1-\frac{1}{b_{3}}-\frac{b}{d}\right\},
\end{equation}
which is possible, if
\begin{equation} \label{GrindEQ__3_67_}
b_{3}<\min\left\{1-\frac{b}{d},\frac{1}{2}+\frac{2-b}{d}\right\},~
\max\left\{\frac{\sigma}{2}-\frac{4-b}{d},\frac{\sigma+1}{2}-\frac{4}{d}-\frac{1}{b_{3}}\right\}<\frac{\sigma}{q_{5}}<\frac{\sigma}{2}.
\end{equation}
In view of \eqref{GrindEQ__3_67_}, it suffices to choose $b_{3}$ satisfying
\begin{equation} \label{GrindEQ__3_68_}
\max\left\{0,\frac{d-2}{2d}\right\}<b_{3}<\min\left\{1-\frac{b}{d},\frac{1}{2}+\frac{2-b}{d}\right\},
\end{equation}
which is possible, due to the fact $b<\min\left\{4,d\right\}$.
This completes the proof of \eqref{GrindEQ__3_52_}.

Next, we prove \eqref{GrindEQ__3_53_}.
We can choose $(a_{4},b_{4})\in B_{0}$ satisfying $\frac{1}{b_{4}'}\in \left(\frac{1}{2},\frac{\sigma+1}{2}\right)$.
We then choose $\alpha_{6}\in (2,\infty)$ such that $\frac{1}{b_{4}'}=\frac{\sigma}{\alpha_{6}}+\frac{1}{2}$.
Using the H\"{o}lder inequality and the embedding $H^{s}\hookrightarrow L^{\alpha_{6}}$, we have
\begin{eqnarray}\begin{split} \nonumber
\left\|(1-\chi(x))|u|^{\sigma}v\right\|_{L^{a_{4}'}(I, L^{b_{4}'})}
\lesssim |I|^{\theta}\left\|u\right\|^{\sigma}_{L^{\infty}(I, L^{\alpha_{6}})}\left\|v\right\|_{L^{\infty}(I, L^{2})}
\lesssim |I|^{\theta}\left\|u\right\|^{\sigma}_{L^{\infty}(I, H^{s})}\left\|v\right\|_{L^{\infty}(I, L^{2})},
\end{split}\end{eqnarray}
where $\theta:=1-\frac{1}{a_{4}}$. This completes the proof.
\end{proof}

Now we are ready to prove Theorem \ref{thm 1.1.}.
\begin{proof}[{\bf Proof of Theorem \ref{thm 1.1.}}]
Let $T>0$ and $M>0$ which will be chosen later. Given $I=[-T,T]$, we define the complete metric space $(D,d)$ as follows:
\begin{equation} \nonumber
D=\{u\in \cap_{i=1}^{7}L^{p_{i}}(I, H_{q_{i}}^{s}):~\max_{i=\overline{1,7}}\left\|u\right\|_{L^{p_{i}}(I, H_{q_{i}}^{s})} \le M\},
\end{equation}
\begin{equation} \nonumber
d(u,v)=\max_{i=\overline{1,7}}\left\|u-v\right\|_{L^{p_{i}}(I, L^{q_{i}})},
\end{equation}
where $(p_{i},q_{i})\in B_{0}$ are given in Lemma \ref{lem 3.2.} and Lemma \ref{lem 3.3.}.
Now we consider the mapping
\begin{equation}\label{GrindEQ__3_69_}
\mathrm{T}:~u(t)\to e^{it\Delta^{2}}u_{0} -i\lambda \int _{0}^{t}e^{i(t-\tau)\Delta^{2}}|x|^{-b}|u(\tau)|^{\sigma}u(\tau)d\tau .
\end{equation}
Lemma \ref{lem 2.7.} (Strichartz estimates) yields that
\begin{eqnarray}\begin{split} \label{GrindEQ__3_70_}
\max_{i=\overline{1,7}}\left\|\mathrm{T}u\right\|_{L^{p_{i}}(I,H_{q_{i}}^{s})}
&\lesssim \left\| u_{0} \right\| _{H^{s} }+\left\|\chi(x)|x|^{-b}|u|^{\sigma}u\right\|_{L^{a_{1}'}(I,\dot{H}_{b_{1}'}^{s-\gamma_{a_{1}',b_{1}'}-4})}\\
&~+\left\|(1-\chi(x))|x|^{-b}|u|^{\sigma}u\right\|_{L^{a_{2}'}(I,\dot{H}_{b_{2}'}^{s-\gamma_{a_{2}',b_{2}'}-4})}\\
&~+\left\|\chi(x)|x|^{-b}|u|^{\sigma}u\right\|_{L^{a_{3}'}(I,L^{b_{3}'})}\\
&~+\left\|(1-\chi(x))|x|^{-b}|u|^{\sigma}u\right\|_{L^{a_{4}'}(I,L^{b_{4}'})},
\end{split}\end{eqnarray}
where $(a_{i}, b_{i})$ $(i=\overline{1,4})$ are given in Lemma \ref{lem 3.2.} and Lemma \ref{lem 3.3.}.
Using \eqref{GrindEQ__3_70_}, Lemma \ref{lem 3.2.} and Lemma \ref{lem 3.3.}, we have
\begin{equation}\label{GrindEQ__3_71_}
\max_{i=\overline{1,7}}\left\|\mathrm{T}u\right\|_{L^{p_{i}}(I,H_{q_{i}}^{s})} \le C \left\| u_{0} \right\| _{H^{s} }+CT^{\theta}M^{\sigma+1}.
\end{equation}
Similarly, using Lemma \ref{lem 2.7.} (Strichartz estimates) and Lemma \ref{lem 3.3.}, we also have
\begin{eqnarray}\begin{split}\label{GrindEQ__3_72_}
d(\mathrm{T}u,\mathrm{T}v)&\lesssim \left\|\chi(x)|x|^{-b}(|u|^{\sigma }+|v|^{\sigma})|u-v|\right\|_{L^{a_{3}'}(I,L^{b_{3}'})}\\
&~+\left\|(1-\chi(x))|x|^{-b}(|u|^{\sigma }+|v|^{\sigma})|u-v|\right\|_{L^{a_{4}'}(I,L^{b_{4}'})}\\
&\le 2C T^{\theta} M^{\sigma}d(u,v).
\end{split}\end{eqnarray}
Putting $M=2C\left\| u_{0} \right\| _{H^{s} } $ and taking $T>0$ satisfying $
CT^{\theta}M^{\sigma } \le \frac{1}{4}$, we can see that $\mathrm{T}: (D,d)\to (D,d)$ is contraction mapping.  Using the standard argument (see e.g. Section 4.9 in \cite{C03} or Section 4.3 in \cite{WHHG11}), we get the desired results.
\end{proof}


\end{document}